\numberwithin{figure}{section} 
\newcommand{\dcup}{\mathbin{\,\sqcup\,}}
\DeclareMathOperator{\Z}{Z}
\DeclareMathOperator{\pt}{pt}
\DeclareMathOperator{\comp}{comp}
\DeclareMathOperator{\thr}{th}
\newcommand{\thp}{\thr_+}
\newcommand{\ptp}{\pt_+}
\newcommand{\Zp}{\Z_+}
\newcommand{\F}{\mathcal F}
\newtheorem{theorem}{Theorem}[section]
\newtheorem{lemma}[theorem]{Lemma}
\newtheorem{corollary}[theorem]{Corollary}
\newtheorem{proposition}[theorem]{Proposition}
\newtheorem{observation}[theorem]{Observation}
\newtheorem*{claim*}{Claim}
\theoremstyle{definition}
\newtheorem{remark}[theorem]{Remark}
\newdimen\Lpt\newdimen\Spt
\colorlet{grayishcyan}{green!30!blue!60!white!95!black}
\colorlet{midblue}{blue!50!white}
\colorlet{darkishblue}{blue!85!black}
\colorlet{gemNodeFill}{grayishcyan!50!midblue}
\colorlet{gemNodeDraw}{gemNodeFill!70!darkishblue}
\tikzset{set graph scales/.code n args={3}{\Lpt=#1pt\Spt=#2pt\tikzset{scale=#3}}}
\newcommand\pgfkeys{/graphs en masse/next index/[/.initial=1,/graphs en masse/add graph/.style={/graphs en masse/add graph internal={[}{#[}}}\input 2\relax]{\pgfkeys{/graphs en masse/next index/#1/.initial=1,/graphs en masse/add graph/.style={/graphs en masse/add graph internal={#1}{##1}}}\input #2\relax}
\def\@parse@colrow#1x#2\end@parse{\def\@cols{#1}\def\@rows{#2}}
\newcommand\DrawGraphs[2]{\@parse@colrow#2\end@parse\pgfkeys{/graphs en masse/next index/#1/.get=\gem@n@tgts,/graphs en masse/vert sep/.get=\gem@vsep,/graphs en masse/horz sep/.get=\gem@hsep}\pgfmathsetmacro{\gem@cwid}{\gem@hsep*2+1}\pgfmathsetmacro{\gem@rht}{\gem@vsep*2+1}\pgfmathsetmacro{\gem@b@cols}{\gem@n@tgts-(\@rows-1)*\@cols-1}\draw[/graphs en masse/grid fmt,xstep=\gem@cwid,ystep=\gem@rht,line cap=rect] (0,\gem@rht) grid ($(\@cols*\gem@cwid,\@rows*\gem@rht)$) (0,0) grid ($(\gem@b@cols*\gem@cwid,\gem@rht)-(0,\pgflinewidth/2)$);\gem@row\@rows\advance\gem@row\m@ne\gem@col\z@\gem@ct\@ne\loop\ifnum\gem@ct<\gem@n@tgts\pgfkeys{/graphs en masse/graphs/#1/\the\gem@ct={shift={($(\the\gem@col*\gem@cwid+\gem@hsep,\the\gem@row*\gem@rht+\gem@vsep)$)}}}\advance\gem@col\@ne\ifnum\gem@col<\@cols\else\gem@col\z@\advance\gem@row\m@ne\fi\advance\gem@ct\@ne\repeat}
\tikzset{/graphs en masse/add graph={
  \node (v1) at (0.862,0.569) {};
  \node (v2) at (0.138,0.125) {};
  \node (v3) at (0.182,0.875) {};
}}
\tikzset{/graphs en masse/add graph={
  \node (v1) at (0.09,0.317) {};
  \node (v2) at (0.91,0.471) {};
  \node (v3) at (0.314,0.901) {};
  \node (v4) at (0.391,0.099) {} edge (v1);
}}
\tikzset{/graphs en masse/add graph={
  \node (v1) at (0.37,0.094) {};
  \node (v2) at (0.905,0.628) {};
  \node (v3) at (0.095,0.906) {};
  \node (v4) at (0.645,0.353) {} edge (v1) edge (v2);
}}
\tikzset{/graphs en masse/add graph={
  \node (v1) at (0.132,0.125) {};
  \node (v2) at (0.305,0.875) {};
  \node (v3) at (0.868,0.35) {};
  \node (v4) at (0.435,0.45) {} edge (v1) edge (v2) edge (v3);
}}
\tikzset{/graphs en masse/add graph={
  \node (v1) at (0.44,0.05) {};
  \node (v2) at (0.56,0.95) {};
  \node (v3) at (0.156,0.267) {} edge (v1);
  \node (v4) at (0.844,0.733) {} edge (v2);
}}
\tikzset{/graphs en masse/add graph={
  \node (v1) at (0.594,0.203) {};
  \node (v2) at (0.764,0.877) {};
  \node (v3) at (0.236,0.123) {} edge (v1);
  \node (v4) at (0.325,0.468) {} edge (v1) edge (v3);
}}
\tikzset{/graphs en masse/add graph={
  \node (v1) at (0.146,0.585) {};
  \node (v2) at (0.854,0.125) {};
  \node (v3) at (0.466,0.875) {} edge (v1);
  \node (v4) at (0.559,0.451) {} edge (v1) edge (v2) edge (v3);
}}
\tikzset{/graphs en masse/add graph={
  \node (v1) at (0.125,0.762) {};
  \node (v2) at (0.875,0.238) {};
  \node (v3) at (0.762,0.875) {} edge (v1) edge (v2);
  \node (v4) at (0.238,0.125) {} edge (v1) edge (v2);
}}
\tikzset{/graphs en masse/add graph={
  \node (v1) at (0.874,0.125) {};
  \node (v2) at (0.126,0.875) {};
  \node (v3) at (0.292,0.292) {} edge (v1) edge (v2);
  \node (v4) at (0.708,0.708) {} edge (v1) edge (v2) edge (v3);
}}
\tikzset{/graphs en masse/add graph={
  \node (v1) at (0.766,0.125) {};
  \node (v2) at (0.875,0.766) {} edge (v1);
  \node (v3) at (0.125,0.234) {} edge (v1) edge (v2);
  \node (v4) at (0.234,0.875) {} edge (v1) edge (v2) edge (v3);
}}
\tikzset{/graphs en masse/add graph={
  \node (v1) at (0.371,0.864) {};
  \node (v2) at (0.103,0.205) {};
  \node (v3) at (0.897,0.08) {};
  \node (v4) at (0.743,0.92) {} edge (v1);
  \node (v5) at (0.116,0.581) {} edge (v1) edge (v2);
}}
\tikzset{/graphs en masse/add graph={
  \node (v1) at (0.658,0.463) {};
  \node (v2) at (0.128,0.843) {};
  \node (v3) at (0.05,0.157) {};
  \node (v4) at (0.95,0.24) {} edge (v1);
  \node (v5) at (0.266,0.481) {} edge (v1) edge (v2) edge (v3);
}}
\tikzset{/graphs en masse/add graph={
  \node (v1) at (0.115,0.606) {};
  \node (v2) at (0.523,0.067) {};
  \node (v3) at (0.302,0.933) {} edge (v1);
  \node (v4) at (0.885,0.173) {} edge (v2);
  \node (v5) at (0.179,0.23) {} edge (v1) edge (v2);
}}
\tikzset{/graphs en masse/add graph={
  \node (v1) at (0.153,0.125) {};
  \node (v2) at (0.847,0.493) {};
  \node (v3) at (0.211,0.501) {} edge (v1);
  \node (v4) at (0.725,0.875) {} edge (v2);
  \node (v5) at (0.527,0.245) {} edge (v1) edge (v2) edge (v3);
}}
\tikzset{/graphs en masse/add graph={
  \node (v1) at (0.584,0.875) {};
  \node (v2) at (0.682,0.125) {};
  \node (v3) at (0.154,0.831) {} edge (v1);
  \node (v4) at (0.846,0.525) {} edge (v1) edge (v2);
  \node (v5) at (0.421,0.48) {} edge (v1) edge (v2) edge (v3) edge (v4);
}}
\tikzset{/graphs en masse/add graph={
  \node (v1) at (0.233,0.52) {};
  \node (v2) at (0.846,0.117) {};
  \node (v3) at (0.502,0.756) {} edge (v1);
  \node (v4) at (0.154,0.883) {} edge (v1) edge (v3);
  \node (v5) at (0.587,0.388) {} edge (v1) edge (v2) edge (v3);
}}
\tikzset{/graphs en masse/add graph={
  \node (v1) at (0.466,0.861) {};
  \node (v2) at (0.146,0.287) {};
  \node (v3) at (0.866,0.08) {};
  \node (v4) at (0.84,0.92) {} edge (v1);
  \node (v5) at (0.489,0.115) {} edge (v2) edge (v3);
  \node (v6) at (0.134,0.668) {} edge (v1) edge (v2);
}}
\tikzset{/graphs en masse/add graph={
  \node (v1) at (0.442,0.864) {};
  \node (v2) at (0.063,0.559) {};
  \node (v3) at (0.937,0.536) {};
  \node (v4) at (0.583,0.136) {};
}}
\tikzset{/graphs en masse/add graph={
  \node (v1) at (0.324,0.387) {};
  \node (v2) at (0.841,0.501) {};
  \node (v3) at (0.67,0.05) {};
  \node (v4) at (0.159,0.95) {};
  \node (v5) at (0.444,0.66) {} edge (v1);
}}
\tikzset{/graphs en masse/add graph={
  \node (v1) at (0.15,0.83) {};
  \node (v2) at (0.21,0.17) {};
  \node (v3) at (0.702,0.754) {};
  \node (v4) at (0.85,0.171) {};
  \node (v5) at (0.292,0.508) {} edge (v1) edge (v2);
}}
\tikzset{/graphs en masse/add graph={
  \node (v1) at (0.441,0.675) {};
  \node (v2) at (0.391,0.112) {};
  \node (v3) at (0.899,0.395) {};
  \node (v4) at (0.101,0.888) {};
  \node (v5) at (0.572,0.385) {} edge (v1) edge (v2) edge (v3);
}}
\tikzset{/graphs en masse/add graph={
  \node (v1) at (0.624,0.85) {};
  \node (v2) at (0.376,0.15) {};
  \node (v3) at (0.85,0.376) {};
  \node (v4) at (0.15,0.624) {};
  \node (v5) at (0.5,0.5) {} edge (v1) edge (v2) edge (v3) edge (v4);
}}
\tikzset{/graphs en masse/add graph={
  \node (v1) at (0.445,0.804) {};
  \node (v2) at (0.255,0.182) {};
  \node (v3) at (0.867,0.818) {};
  \node (v4) at (0.133,0.756) {} edge (v1);
  \node (v5) at (0.573,0.219) {} edge (v2);
}}
\tikzset{/graphs en masse/add graph={
  \node (v1) at (0.413,0.874) {};
  \node (v2) at (0.312,0.126) {};
  \node (v3) at (0.859,0.468) {};
  \node (v4) at (0.141,0.704) {} edge (v1);
  \node (v5) at (0.571,0.321) {} edge (v2) edge (v3);
}}
\tikzset{/graphs en masse/add graph={
  \node (v1) at (0.367,0.863) {};
  \node (v2) at (0.874,0.808) {};
  \node (v3) at (0.537,0.137) {};
  \node (v4) at (0.126,0.659) {} edge (v1);
  \node (v5) at (0.417,0.554) {} edge (v1) edge (v4);
}}
\tikzset{/graphs en masse/add graph={
  \node (v1) at (0.292,0.868) {};
  \node (v2) at (0.816,0.624) {};
  \node (v3) at (0.648,0.132) {};
  \node (v4) at (0.184,0.569) {} edge (v1);
  \node (v5) at (0.496,0.632) {} edge (v1) edge (v2) edge (v4);
}}
\tikzset{/graphs en masse/add graph={
  \node (v1) at (0.334,0.845) {};
  \node (v2) at (0.85,0.689) {};
  \node (v3) at (0.516,0.155) {};
  \node (v4) at (0.15,0.55) {} edge (v1);
  \node (v5) at (0.515,0.527) {} edge (v1) edge (v2) edge (v3) edge (v4);
}}
\tikzset{/graphs en masse/add graph={
  \node (v1) at (0.129,0.655) {};
  \node (v2) at (0.437,0.339) {};
  \node (v3) at (0.875,0.418) {};
  \node (v4) at (0.125,0.331) {} edge (v1) edge (v2);
  \node (v5) at (0.444,0.669) {} edge (v1) edge (v2);
}}
\tikzset{/graphs en masse/add graph={
  \node (v1) at (0.755,0.647) {};
  \node (v2) at (0.245,0.672) {};
  \node (v3) at (0.475,0.146) {};
  \node (v4) at (0.51,0.854) {} edge (v1) edge (v2);
  \node (v5) at (0.49,0.462) {} edge (v1) edge (v2) edge (v3);
}}
\tikzset{/graphs en masse/add graph={
  \node (v1) at (0.875,0.627) {};
  \node (v2) at (0.436,0.293) {};
  \node (v3) at (0.125,0.707) {};
  \node (v4) at (0.564,0.591) {} edge (v1) edge (v2);
  \node (v5) at (0.744,0.338) {} edge (v1) edge (v2) edge (v4);
}}
\tikzset{/graphs en masse/add graph={
  \node (v1) at (0.826,0.85) {};
  \node (v2) at (0.758,0.15) {};
  \node (v3) at (0.174,0.588) {};
  \node (v4) at (0.784,0.502) {} edge (v1) edge (v2);
  \node (v5) at (0.504,0.712) {} edge (v1) edge (v3) edge (v4);
}}
\tikzset{/graphs en masse/add graph={
  \node (v1) at (0.811,0.602) {};
  \node (v2) at (0.189,0.762) {};
  \node (v3) at (0.363,0.15) {};
  \node (v4) at (0.543,0.85) {} edge (v1) edge (v2);
  \node (v5) at (0.454,0.505) {} edge (v1) edge (v2) edge (v3) edge (v4);
}}
\tikzset{/graphs en masse/add graph={
  \node (v1) at (0.52,0.15) {};
  \node (v2) at (0.314,0.839) {};
  \node (v3) at (0.686,0.85) {};
  \node (v4) at (0.372,0.507) {} edge (v1) edge (v2) edge (v3);
  \node (v5) at (0.647,0.515) {} edge (v1) edge (v2) edge (v3);
}}
\tikzset{/graphs en masse/add graph={
  \node (v1) at (0.85,0.728) {};
  \node (v2) at (0.15,0.644) {};
  \node (v3) at (0.312,0.272) {};
  \node (v4) at (0.467,0.721) {} edge (v1) edge (v2) edge (v3);
  \node (v5) at (0.584,0.452) {} edge (v1) edge (v2) edge (v3) edge (v4);
}}
\tikzset{/graphs en masse/add graph={
  \node (v1) at (0.757,0.449) {};
  \node (v2) at (0.524,0.86) {};
  \node (v3) at (0.472,0.29) {} edge (v1);
  \node (v4) at (0.243,0.702) {} edge (v2);
  \node (v5) at (0.743,0.14) {} edge (v1) edge (v3);
}}
\tikzset{/graphs en masse/add graph={
  \node (v1) at (0.714,0.784) {};
  \node (v2) at (0.15,0.545) {};
  \node (v3) at (0.85,0.451) {} edge (v1);
  \node (v4) at (0.294,0.216) {} edge (v2);
  \node (v5) at (0.5,0.503) {} edge (v1) edge (v2) edge (v3) edge (v4);
}}
\tikzset{/graphs en masse/add graph={
  \node (v1) at (0.628,0.15) {};
  \node (v2) at (0.156,0.672) {};
  \node (v3) at (0.844,0.527) {} edge (v1);
  \node (v4) at (0.202,0.24) {} edge (v1) edge (v2);
  \node (v5) at (0.553,0.85) {} edge (v2) edge (v3);
}}
\tikzset{/graphs en masse/add graph={
  \node (v1) at (0.578,0.651) {};
  \node (v2) at (0.15,0.351) {};
  \node (v3) at (0.85,0.411) {} edge (v1);
  \node (v4) at (0.217,0.715) {} edge (v1) edge (v2);
  \node (v5) at (0.511,0.285) {} edge (v1) edge (v2) edge (v3);
}}
\tikzset{/graphs en masse/add graph={
  \node (v1) at (0.85,0.551) {};
  \node (v2) at (0.15,0.772) {};
  \node (v3) at (0.527,0.228) {} edge (v1);
  \node (v4) at (0.542,0.542) {} edge (v1) edge (v3);
  \node (v5) at (0.844,0.233) {} edge (v1) edge (v3) edge (v4);
}}
\tikzset{/graphs en masse/add graph={
  \node (v1) at (0.508,0.78) {};
  \node (v2) at (0.15,0.22) {};
  \node (v3) at (0.85,0.727) {} edge (v1);
  \node (v4) at (0.794,0.386) {} edge (v1) edge (v3);
  \node (v5) at (0.466,0.449) {} edge (v1) edge (v2) edge (v3) edge (v4);
}}
\tikzset{/graphs en masse/add graph={
  \node (v1) at (0.467,0.85) {};
  \node (v2) at (0.228,0.15) {};
  \node (v3) at (0.772,0.651) {} edge (v1);
  \node (v4) at (0.575,0.362) {} edge (v1) edge (v2) edge (v3);
  \node (v5) at (0.282,0.553) {} edge (v1) edge (v2) edge (v3) edge (v4);
}}
\tikzset{/graphs en masse/add graph={
  \node (v1) at (0.365,0.588) {};
  \node (v2) at (0.556,0.268) {};
  \node (v3) at (0.672,0.757) {} edge (v1) edge (v2);
  \node (v4) at (0.15,0.243) {} edge (v1) edge (v2);
  \node (v5) at (0.85,0.457) {} edge (v1) edge (v2) edge (v3);
}}
\tikzset{/graphs en masse/add graph={
  \node (v1) at (0.241,0.776) {};
  \node (v2) at (0.759,0.224) {};
  \node (v3) at (0.776,0.759) {} edge (v1) edge (v2);
  \node (v4) at (0.224,0.241) {} edge (v1) edge (v2);
  \node (v5) at (0.5,0.5) {} edge (v1) edge (v2) edge (v3) edge (v4);
}}
\tikzset{/graphs en masse/add graph={
  \node (v1) at (0.28,0.748) {};
  \node (v2) at (0.201,0.426) {};
  \node (v3) at (0.67,0.804) {} edge (v1) edge (v2);
  \node (v4) at (0.52,0.196) {} edge (v1) edge (v2) edge (v3);
  \node (v5) at (0.799,0.45) {} edge (v1) edge (v2) edge (v3) edge (v4);
}}
\tikzset{/graphs en masse/add graph={
  \node (v1) at (0.29,0.771) {};
  \node (v2) at (0.196,0.407) {} edge (v1);
  \node (v3) at (0.665,0.795) {} edge (v1) edge (v2);
  \node (v4) at (0.514,0.205) {} edge (v1) edge (v2) edge (v3);
  \node (v5) at (0.804,0.445) {} edge (v1) edge (v2) edge (v3) edge (v4);
}}
\tikzset{/graphs en masse/add graph={
  \node (v1) at (0.696,0.561) {};
  \node (v2) at (0.125,0.59) {};
  \node (v3) at (0.05,0.096) {};
  \node (v4) at (0.445,0.904) {};
  \node (v5) at (0.95,0.393) {} edge (v1);
  \node (v6) at (0.407,0.429) {} edge (v1) edge (v2);
}}
\tikzset{/graphs en masse/add graph={
  \node (v1) at (0.602,0.236) {};
  \node (v2) at (0.083,0.217) {};
  \node (v3) at (0.246,0.783) {};
  \node (v4) at (0.666,0.704) {};
  \node (v5) at (0.917,0.333) {} edge (v1);
  \node (v6) at (0.318,0.444) {} edge (v1) edge (v2) edge (v3);
}}
\tikzset{/graphs en masse/add graph={
  \node (v1) at (0.693,0.401) {};
  \node (v2) at (0.065,0.633) {};
  \node (v3) at (0.272,0.177) {};
  \node (v4) at (0.539,0.823) {};
  \node (v5) at (0.935,0.198) {} edge (v1);
  \node (v6) at (0.381,0.51) {} edge (v1) edge (v2) edge (v3) edge (v4);
}}
\tikzset{/graphs en masse/add graph={
  \node (v1) at (0.422,0.298) {};
  \node (v2) at (0.125,0.783) {};
  \node (v3) at (0.793,0.794) {};
  \node (v4) at (0.875,0.206) {};
  \node (v5) at (0.155,0.464) {} edge (v1) edge (v2);
  \node (v6) at (0.695,0.479) {} edge (v1) edge (v3) edge (v4);
}}
\tikzset{/graphs en masse/add graph={
  \node (v1) at (0.791,0.177) {};
  \node (v2) at (0.85,0.758) {};
  \node (v3) at (0.15,0.242) {};
  \node (v4) at (0.209,0.823) {};
  \node (v5) at (0.666,0.483) {} edge (v1) edge (v2);
  \node (v6) at (0.334,0.517) {} edge (v3) edge (v4) edge (v5);
}}
\tikzset{/graphs en masse/add graph={
  \node (v1) at (0.407,0.23) {};
  \node (v2) at (0.897,0.48) {};
  \node (v3) at (0.349,0.722) {};
  \node (v4) at (0.103,0.35) {} edge (v1);
  \node (v5) at (0.849,0.8) {} edge (v2);
  \node (v6) at (0.732,0.2) {} edge (v1) edge (v2);
}}
\tikzset{/graphs en masse/add graph={
  \node (v1) at (0.616,0.174) {};
  \node (v2) at (0.55,0.86) {};
  \node (v3) at (0.069,0.483) {};
  \node (v4) at (0.931,0.14) {} edge (v1);
  \node (v5) at (0.846,0.74) {} edge (v2);
  \node (v6) at (0.332,0.307) {} edge (v1) edge (v3);
}}
\tikzset{/graphs en masse/add graph={
  \node (v1) at (0.54,0.229) {};
  \node (v2) at (0.577,0.812) {};
  \node (v3) at (0.104,0.549) {};
  \node (v4) at (0.856,0.183) {} edge (v1);
  \node (v5) at (0.896,0.817) {} edge (v2);
  \node (v6) at (0.424,0.529) {} edge (v1) edge (v2) edge (v3);
}}
\tikzset{/graphs en masse/add graph={
  \node (v1) at (0.475,0.114) {};
  \node (v2) at (0.131,0.625) {};
  \node (v3) at (0.869,0.185) {};
  \node (v4) at (0.173,0.146) {} edge (v1);
  \node (v5) at (0.314,0.886) {} edge (v2);
  \node (v6) at (0.363,0.408) {} edge (v1) edge (v2) edge (v4);
}}
\tikzset{/graphs en masse/add graph={
  \node (v1) at (0.256,0.847) {};
  \node (v2) at (0.663,0.385) {};
  \node (v3) at (0.122,0.286) {};
  \node (v4) at (0.561,0.797) {} edge (v1);
  \node (v5) at (0.878,0.153) {} edge (v2);
  \node (v6) at (0.361,0.53) {} edge (v1) edge (v2) edge (v3) edge (v4);
}}
\tikzset{/graphs en masse/add graph={
  \node (v1) at (0.39,0.57) {};
  \node (v2) at (0.76,0.135) {};
  \node (v3) at (0.824,0.865) {};
  \node (v4) at (0.176,0.813) {} edge (v1);
  \node (v5) at (0.694,0.443) {} edge (v1) edge (v2);
  \node (v6) at (0.461,0.256) {} edge (v1) edge (v2) edge (v5);
}}
\tikzset{/graphs en masse/add graph={
  \node (v1) at (0.307,0.425) {};
  \node (v2) at (0.886,0.664) {};
  \node (v3) at (0.114,0.891) {};
  \node (v4) at (0.303,0.109) {} edge (v1);
  \node (v5) at (0.611,0.509) {} edge (v1) edge (v2);
  \node (v6) at (0.386,0.73) {} edge (v1) edge (v3) edge (v5);
}}
\tikzset{/graphs en masse/add graph={
  \node (v1) at (0.411,0.349) {};
  \node (v2) at (0.814,0.741) {};
  \node (v3) at (0.233,0.878) {};
  \node (v4) at (0.186,0.122) {} edge (v1);
  \node (v5) at (0.71,0.448) {} edge (v1) edge (v2);
  \node (v6) at (0.49,0.673) {} edge (v1) edge (v2) edge (v3) edge (v5);
}}
\tikzset{/graphs en masse/add graph={
  \node (v1) at (0.417,0.482) {};
  \node (v2) at (0.914,0.28) {};
  \node (v3) at (0.086,0.259) {};
  \node (v4) at (0.524,0.777) {} edge (v1);
  \node (v5) at (0.605,0.223) {} edge (v1) edge (v2);
  \node (v6) at (0.714,0.522) {} edge (v1) edge (v2) edge (v4) edge (v5);
}}
\tikzset{/graphs en masse/add graph={
  \node (v1) at (0.666,0.197) {};
  \node (v2) at (0.705,0.823) {};
  \node (v3) at (0.15,0.718) {};
  \node (v4) at (0.312,0.177) {} edge (v1);
  \node (v5) at (0.85,0.498) {} edge (v1) edge (v2);
  \node (v6) at (0.477,0.519) {} edge (v1) edge (v2) edge (v3) edge (v4) edge (v5);
}}
\tikzset{/graphs en masse/add graph={
  \node (v1) at (0.315,0.759) {};
  \node (v2) at (0.836,0.716) {};
  \node (v3) at (0.632,0.241) {};
  \node (v4) at (0.09,0.531) {} edge (v1);
  \node (v5) at (0.91,0.402) {} edge (v2) edge (v3);
  \node (v6) at (0.558,0.556) {} edge (v1) edge (v2) edge (v3);
}}
\tikzset{/graphs en masse/add graph={
  \node (v1) at (0.15,0.79) {};
  \node (v2) at (0.737,0.81) {};
  \node (v3) at (0.738,0.19) {};
  \node (v4) at (0.187,0.466) {} edge (v1);
  \node (v5) at (0.85,0.5) {} edge (v2) edge (v3);
  \node (v6) at (0.442,0.67) {} edge (v1) edge (v2) edge (v4);
}}
\tikzset{/graphs en masse/add graph={
  \node (v1) at (0.321,0.499) {};
  \node (v2) at (0.866,0.18) {};
  \node (v3) at (0.807,0.82) {};
  \node (v4) at (0.134,0.241) {} edge (v1);
  \node (v5) at (0.834,0.5) {} edge (v2) edge (v3);
  \node (v6) at (0.582,0.314) {} edge (v1) edge (v2) edge (v5);
}}
\tikzset{/graphs en masse/add graph={
  \node (v1) at (0.119,0.353) {};
  \node (v2) at (0.881,0.404) {};
  \node (v3) at (0.427,0.879) {};
  \node (v4) at (0.341,0.121) {} edge (v1);
  \node (v5) at (0.646,0.634) {} edge (v2) edge (v3);
  \node (v6) at (0.421,0.419) {} edge (v1) edge (v4) edge (v5);
}}
\tikzset{/graphs en masse/add graph={
  \node (v1) at (0.234,0.665) {};
  \node (v2) at (0.766,0.383) {};
  \node (v3) at (0.239,0.18) {};
  \node (v4) at (0.492,0.855) {} edge (v1);
  \node (v5) at (0.556,0.145) {} edge (v2) edge (v3);
  \node (v6) at (0.444,0.426) {} edge (v1) edge (v2) edge (v3) edge (v5);
}}
\tikzset{/graphs en masse/add graph={
  \node (v1) at (0.499,0.311) {};
  \node (v2) at (0.792,0.651) {};
  \node (v3) at (0.241,0.879) {};
  \node (v4) at (0.208,0.431) {} edge (v1);
  \node (v5) at (0.25,0.121) {} edge (v1) edge (v4);
  \node (v6) at (0.461,0.631) {} edge (v1) edge (v2) edge (v3) edge (v4);
}}
\tikzset{/graphs en masse/add graph={
  \node (v1) at (0.173,0.485) {};
  \node (v2) at (0.85,0.671) {};
  \node (v3) at (0.15,0.837) {};
  \node (v4) at (0.381,0.163) {} edge (v1);
  \node (v5) at (0.493,0.812) {} edge (v1) edge (v2) edge (v3);
  \node (v6) at (0.492,0.532) {} edge (v1) edge (v2) edge (v3) edge (v4);
}}
\tikzset{/graphs en masse/add graph={
  \node (v1) at (0.422,0.336) {};
  \node (v2) at (0.738,0.173) {};
  \node (v3) at (0.835,0.827) {};
  \node (v4) at (0.15,0.582) {} edge (v1);
  \node (v5) at (0.85,0.466) {} edge (v1) edge (v2) edge (v3);
  \node (v6) at (0.599,0.572) {} edge (v1) edge (v2) edge (v3) edge (v5);
}}
\tikzset{/graphs en masse/add graph={
  \node (v1) at (0.459,0.259) {};
  \node (v2) at (0.308,0.803) {};
  \node (v3) at (0.85,0.522) {};
  \node (v4) at (0.15,0.197) {} edge (v1);
  \node (v5) at (0.535,0.573) {} edge (v1) edge (v2) edge (v3);
  \node (v6) at (0.24,0.495) {} edge (v1) edge (v2) edge (v4) edge (v5);
}}
\tikzset{/graphs en masse/add graph={
  \node (v1) at (0.396,0.289) {};
  \node (v2) at (0.747,0.15) {};
  \node (v3) at (0.783,0.85) {};
  \node (v4) at (0.179,0.606) {} edge (v1);
  \node (v5) at (0.821,0.474) {} edge (v1) edge (v2) edge (v3);
  \node (v6) at (0.545,0.56) {} edge (v1) edge (v2) edge (v3) edge (v4) edge (v5);
}}
\tikzset{/graphs en masse/add graph={
  \node (v1) at (0.59,0.496) {};
  \node (v2) at (0.85,0.51) {};
  \node (v3) at (0.15,0.729) {};
  \node (v4) at (0.843,0.829) {} edge (v1) edge (v2);
  \node (v5) at (0.719,0.171) {} edge (v1) edge (v2);
  \node (v6) at (0.511,0.789) {} edge (v1) edge (v2) edge (v3);
}}
\tikzset{/graphs en masse/add graph={
  \node (v1) at (0.252,0.514) {};
  \node (v2) at (0.767,0.85) {};
  \node (v3) at (0.565,0.15) {};
  \node (v4) at (0.444,0.771) {} edge (v1) edge (v2);
  \node (v5) at (0.233,0.187) {} edge (v1) edge (v3);
  \node (v6) at (0.586,0.474) {} edge (v1) edge (v3) edge (v4);
}}
\tikzset{/graphs en masse/add graph={
  \node (v1) at (0.691,0.85) {};
  \node (v2) at (0.816,0.15) {};
  \node (v3) at (0.184,0.476) {};
  \node (v4) at (0.784,0.502) {} edge (v1) edge (v2);
  \node (v5) at (0.329,0.809) {} edge (v1) edge (v3);
  \node (v6) at (0.489,0.286) {} edge (v2) edge (v3) edge (v4);
}}
\tikzset{/graphs en masse/add graph={
  \node (v1) at (0.529,0.557) {};
  \node (v2) at (0.191,0.85) {};
  \node (v3) at (0.674,0.134) {};
  \node (v4) at (0.51,0.866) {} edge (v1) edge (v2);
  \node (v5) at (0.809,0.423) {} edge (v1) edge (v3);
  \node (v6) at (0.794,0.736) {} edge (v1) edge (v4) edge (v5);
}}
\tikzset{/graphs en masse/add graph={
  \node (v1) at (0.19,0.317) {};
  \node (v2) at (0.509,0.852) {};
  \node (v3) at (0.81,0.148) {};
  \node (v4) at (0.288,0.623) {} edge (v1) edge (v2);
  \node (v5) at (0.506,0.249) {} edge (v1) edge (v3);
  \node (v6) at (0.6,0.555) {} edge (v2) edge (v4) edge (v5);
}}
\tikzset{/graphs en masse/add graph={
  \node (v1) at (0.28,0.343) {};
  \node (v2) at (0.472,0.859) {};
  \node (v3) at (0.77,0.35) {};
  \node (v4) at (0.23,0.653) {} edge (v1) edge (v2);
  \node (v5) at (0.526,0.141) {} edge (v1) edge (v3);
  \node (v6) at (0.526,0.552) {} edge (v1) edge (v2) edge (v3) edge (v4);
}}
\tikzset{/graphs en masse/add graph={
  \node (v1) at (0.228,0.334) {};
  \node (v2) at (0.772,0.229) {};
  \node (v3) at (0.522,0.872) {};
  \node (v4) at (0.468,0.128) {} edge (v1) edge (v2);
  \node (v5) at (0.303,0.64) {} edge (v1) edge (v3);
  \node (v6) at (0.532,0.429) {} edge (v1) edge (v2) edge (v4) edge (v5);
}}
\tikzset{/graphs en masse/add graph={
  \node (v1) at (0.15,0.261) {};
  \node (v2) at (0.538,0.845) {};
  \node (v3) at (0.85,0.229) {};
  \node (v4) at (0.27,0.605) {} edge (v1) edge (v2);
  \node (v5) at (0.498,0.155) {} edge (v1) edge (v3);
  \node (v6) at (0.616,0.497) {} edge (v2) edge (v3) edge (v4) edge (v5);
}}
\tikzset{/graphs en masse/add graph={
  \node (v1) at (0.192,0.33) {};
  \node (v2) at (0.522,0.85) {};
  \node (v3) at (0.808,0.313) {};
  \node (v4) at (0.212,0.681) {} edge (v1) edge (v2);
  \node (v5) at (0.494,0.15) {} edge (v1) edge (v3);
  \node (v6) at (0.507,0.497) {} edge (v1) edge (v2) edge (v3) edge (v4) edge (v5);
}}
\tikzset{/graphs en masse/add graph={
  \node (v1) at (0.4,0.174) {};
  \node (v2) at (0.376,0.769) {};
  \node (v3) at (0.601,0.409) {} edge (v1);
  \node (v4) at (0.679,0.876) {} edge (v2);
  \node (v5) at (0.716,0.124) {} edge (v1) edge (v3);
  \node (v6) at (0.284,0.465) {} edge (v1) edge (v2) edge (v3);
}}
\tikzset{/graphs en masse/add graph={
  \node (v1) at (0.467,0.308) {};
  \node (v2) at (0.837,0.847) {};
  \node (v3) at (0.163,0.489) {} edge (v1);
  \node (v4) at (0.497,0.85) {} edge (v2);
  \node (v5) at (0.177,0.15) {} edge (v1) edge (v3);
  \node (v6) at (0.687,0.553) {} edge (v1) edge (v2) edge (v4);
}}
\tikzset{/graphs en masse/add graph={
  \node (v1) at (0.345,0.355) {};
  \node (v2) at (0.65,0.882) {};
  \node (v3) at (0.671,0.388) {} edge (v1);
  \node (v4) at (0.329,0.889) {} edge (v2);
  \node (v5) at (0.536,0.111) {} edge (v1) edge (v3);
  \node (v6) at (0.481,0.625) {} edge (v1) edge (v2) edge (v3) edge (v4);
}}
\tikzset{/graphs en masse/add graph={
  \node (v1) at (0.16,0.39) {};
  \node (v2) at (0.716,0.311) {};
  \node (v3) at (0.129,0.664) {} edge (v1);
  \node (v4) at (0.871,0.592) {} edge (v2);
  \node (v5) at (0.396,0.689) {} edge (v1) edge (v3);
  \node (v6) at (0.422,0.416) {} edge (v1) edge (v2) edge (v3) edge (v5);
}}
\tikzset{/graphs en masse/add graph={
  \node (v1) at (0.492,0.236) {};
  \node (v2) at (0.508,0.764) {};
  \node (v3) at (0.846,0.15) {} edge (v1);
  \node (v4) at (0.154,0.85) {} edge (v2);
  \node (v5) at (0.765,0.506) {} edge (v1) edge (v2) edge (v3);
  \node (v6) at (0.235,0.494) {} edge (v1) edge (v2) edge (v4);
}}
\tikzset{/graphs en masse/add graph={
  \node (v1) at (0.178,0.217) {};
  \node (v2) at (0.454,0.811) {};
  \node (v3) at (0.497,0.15) {} edge (v1);
  \node (v4) at (0.822,0.85) {} edge (v2);
  \node (v5) at (0.241,0.522) {} edge (v1) edge (v2) edge (v3);
  \node (v6) at (0.542,0.455) {} edge (v1) edge (v2) edge (v3) edge (v5);
}}
\tikzset{/graphs en masse/add graph={
  \node (v1) at (0.429,0.21) {};
  \node (v2) at (0.571,0.79) {};
  \node (v3) at (0.769,0.15) {} edge (v1);
  \node (v4) at (0.231,0.85) {} edge (v2);
  \node (v5) at (0.667,0.469) {} edge (v1) edge (v2) edge (v3);
  \node (v6) at (0.333,0.531) {} edge (v1) edge (v2) edge (v4) edge (v5);
}}
\tikzset{/graphs en masse/add graph={
  \node (v1) at (0.441,0.15) {};
  \node (v2) at (0.601,0.85) {};
  \node (v3) at (0.798,0.156) {} edge (v1);
  \node (v4) at (0.202,0.792) {} edge (v2);
  \node (v5) at (0.778,0.498) {} edge (v1) edge (v2) edge (v3);
  \node (v6) at (0.446,0.486) {} edge (v1) edge (v2) edge (v3) edge (v4) edge (v5);
}}
\tikzset{/graphs en masse/add graph={
  \node (v1) at (0.403,0.751) {};
  \node (v2) at (0.85,0.249) {};
  \node (v3) at (0.15,0.47) {} edge (v1);
  \node (v4) at (0.805,0.624) {} edge (v1) edge (v2);
  \node (v5) at (0.546,0.505) {} edge (v1) edge (v2) edge (v3) edge (v4);
  \node (v6) at (0.474,0.277) {} edge (v1) edge (v2) edge (v3) edge (v4) edge (v5);
}}
\tikzset{/graphs en masse/add graph={
  \node (v1) at (0.497,0.271) {};
  \node (v2) at (0.15,0.842) {};
  \node (v3) at (0.729,0.509) {} edge (v1);
  \node (v4) at (0.85,0.158) {} edge (v1) edge (v3);
  \node (v5) at (0.271,0.491) {} edge (v1) edge (v2) edge (v3);
  \node (v6) at (0.503,0.729) {} edge (v1) edge (v2) edge (v3) edge (v5);
}}
\tikzset{/graphs en masse/add graph={
  \node (v1) at (0.187,0.514) {};
  \node (v2) at (0.813,0.15) {};
  \node (v3) at (0.252,0.823) {} edge (v1);
  \node (v4) at (0.589,0.85) {} edge (v1) edge (v3);
  \node (v5) at (0.678,0.556) {} edge (v1) edge (v2) edge (v3);
  \node (v6) at (0.44,0.34) {} edge (v1) edge (v2) edge (v3) edge (v4) edge (v5);
}}
\tikzset{/graphs en masse/add graph={
  \node (v1) at (0.187,0.373) {};
  \node (v2) at (0.85,0.199) {};
  \node (v3) at (0.15,0.658) {} edge (v1);
  \node (v4) at (0.401,0.801) {} edge (v1) edge (v3);
  \node (v5) at (0.6,0.596) {} edge (v1) edge (v3) edge (v4);
  \node (v6) at (0.475,0.334) {} edge (v1) edge (v2) edge (v3) edge (v4);
}}
\tikzset{/graphs en masse/add graph={
  \node (v1) at (0.238,0.287) {};
  \node (v2) at (0.762,0.713) {};
  \node (v3) at (0.607,0.367) {} edge (v1) edge (v2);
  \node (v4) at (0.189,0.889) {} edge (v1) edge (v2);
  \node (v5) at (0.811,0.111) {} edge (v1) edge (v2) edge (v3);
  \node (v6) at (0.393,0.633) {} edge (v1) edge (v2) edge (v3) edge (v4);
}}
\tikzset{/graphs en masse/add graph={
  \node (v1) at (0.225,0.368) {};
  \node (v2) at (0.666,0.276) {};
  \node (v3) at (0.557,0.836) {};
  \node (v4) at (0.95,0.612) {};
  \node (v5) at (0.05,0.582) {} edge (v1);
  \node (v6) at (0.414,0.164) {} edge (v1) edge (v2);
  \node (v7) at (0.671,0.569) {} edge (v2) edge (v3) edge (v4);
}}
\tikzset{/graphs en masse/add graph={
  \node (v1) at (0.848,0.513) {};
  \node (v2) at (0.569,0.051) {};
  \node (v3) at (0.148,0.504) {};
  \node (v4) at (0.231,0.949) {};
  \node (v5) at (0.744,0.812) {} edge (v1);
  \node (v6) at (0.32,0.243) {} edge (v2) edge (v3);
  \node (v7) at (0.852,0.196) {} edge (v1) edge (v2);
}}
\tikzset{/graphs en masse/add graph={
  \node (v1) at (0.402,0.27) {};
  \node (v2) at (0.906,0.617) {};
  \node (v3) at (0.353,0.911) {};
  \node (v4) at (0.882,0.089) {};
  \node (v5) at (0.094,0.342) {} edge (v1);
  \node (v6) at (0.658,0.817) {} edge (v2) edge (v3);
  \node (v7) at (0.71,0.36) {} edge (v1) edge (v2) edge (v4);
}}
\tikzset{/graphs en masse/add graph={
  \node (v1) at (0.232,0.557) {};
  \node (v2) at (0.783,0.57) {};
  \node (v3) at (0.518,0.114) {};
  \node (v4) at (0.193,0.872) {} edge (v1);
  \node (v5) at (0.807,0.886) {} edge (v2);
  \node (v6) at (0.226,0.24) {} edge (v1) edge (v3);
  \node (v7) at (0.803,0.254) {} edge (v2) edge (v3);
}}
\tikzset{/graphs en masse/add graph={
  \node (v1) at (0.087,0.531) {};
  \node (v2) at (0.659,0.771) {};
  \node (v3) at (0.535,0.149) {};
  \node (v4) at (0.108,0.851) {} edge (v1);
  \node (v5) at (0.913,0.587) {} edge (v2);
  \node (v6) at (0.231,0.246) {} edge (v1) edge (v3);
  \node (v7) at (0.634,0.452) {} edge (v2) edge (v3) edge (v5);
}}
\tikzset{/graphs en masse/add graph={
  \node (v1) at (0.793,0.364) {};
  \node (v2) at (0.246,0.272) {};
  \node (v3) at (0.336,0.853) {};
  \node (v4) at (0.848,0.685) {} edge (v1);
  \node (v5) at (0.5,0.445) {} edge (v1) edge (v2);
  \node (v6) at (0.152,0.586) {} edge (v2) edge (v3);
  \node (v7) at (0.547,0.147) {} edge (v1) edge (v2) edge (v5);
}}
\tikzset{/graphs en masse/add graph={
  \node (v1) at (0.655,0.822) {};
  \node (v2) at (0.721,0.276) {};
  \node (v3) at (0.15,0.373) {};
  \node (v4) at (0.338,0.807) {} edge (v1);
  \node (v5) at (0.85,0.566) {} edge (v1) edge (v2);
  \node (v6) at (0.411,0.178) {} edge (v2) edge (v3);
  \node (v7) at (0.536,0.543) {} edge (v1) edge (v2) edge (v4) edge (v5);
}}
\tikzset{/graphs en masse/add graph={
  \node (v1) at (0.576,0.82) {};
  \node (v2) at (0.722,0.384) {};
  \node (v3) at (0.139,0.18) {};
  \node (v4) at (0.259,0.805) {} edge (v1);
  \node (v5) at (0.861,0.67) {} edge (v1) edge (v2);
  \node (v6) at (0.454,0.226) {} edge (v2) edge (v3);
  \node (v7) at (0.437,0.54) {} edge (v1) edge (v2) edge (v4) edge (v6);
}}
\tikzset{/graphs en masse/add graph={
  \node (v1) at (0.571,0.709) {};
  \node (v2) at (0.786,0.231) {};
  \node (v3) at (0.15,0.23) {};
  \node (v4) at (0.268,0.803) {} edge (v1);
  \node (v5) at (0.85,0.548) {} edge (v1) edge (v2);
  \node (v6) at (0.466,0.197) {} edge (v2) edge (v3);
  \node (v7) at (0.333,0.491) {} edge (v1) edge (v3) edge (v4) edge (v6);
}}
\tikzset{/graphs en masse/add graph={
  \node (v1) at (0.525,0.801) {};
  \node (v2) at (0.795,0.331) {};
  \node (v3) at (0.2,0.134) {};
  \node (v4) at (0.214,0.866) {} edge (v1);
  \node (v5) at (0.8,0.645) {} edge (v1) edge (v2);
  \node (v6) at (0.513,0.185) {} edge (v2) edge (v3);
  \node (v7) at (0.528,0.493) {} edge (v1) edge (v2) edge (v5) edge (v6);
}}
\tikzset{/graphs en masse/add graph={
  \node (v1) at (0.693,0.304) {};
  \node (v2) at (0.21,0.37) {};
  \node (v3) at (0.461,0.868) {};
  \node (v4) at (0.798,0.6) {} edge (v1);
  \node (v5) at (0.424,0.132) {} edge (v1) edge (v2);
  \node (v6) at (0.202,0.687) {} edge (v2) edge (v3);
  \node (v7) at (0.481,0.546) {} edge (v1) edge (v2) edge (v3) edge (v4) edge (v6);
}}
\tikzset{/graphs en masse/add graph={
  \node (v1) at (0.773,0.493) {};
  \node (v2) at (0.322,0.201) {};
  \node (v3) at (0.273,0.752) {};
  \node (v4) at (0.851,0.799) {} edge (v1);
  \node (v5) at (0.633,0.215) {} edge (v1) edge (v2);
  \node (v6) at (0.149,0.464) {} edge (v2) edge (v3);
  \node (v7) at (0.459,0.486) {} edge (v1) edge (v2) edge (v3) edge (v5) edge (v6);
}}
\tikzset{/graphs en masse/add graph={
  \node (v1) at (0.358,0.172) {};
  \node (v2) at (0.85,0.46) {};
  \node (v3) at (0.401,0.828) {};
  \node (v4) at (0.15,0.42) {} edge (v1);
  \node (v5) at (0.681,0.185) {} edge (v1) edge (v2);
  \node (v6) at (0.717,0.754) {} edge (v2) edge (v3);
  \node (v7) at (0.5,0.486) {} edge (v1) edge (v2) edge (v3) edge (v4) edge (v5) edge (v6);
}}
\tikzset{/graphs en masse/add graph={
  \node (v1) at (0.774,0.416) {};
  \node (v2) at (0.222,0.44) {};
  \node (v3) at (0.542,0.894) {};
  \node (v4) at (0.778,0.106) {} edge (v1);
  \node (v5) at (0.254,0.759) {} edge (v2) edge (v3);
  \node (v6) at (0.496,0.264) {} edge (v1) edge (v2) edge (v4);
  \node (v7) at (0.504,0.577) {} edge (v1) edge (v2) edge (v3) edge (v5) edge (v6);
}}
\tikzset{/graphs en masse/add graph={
  \node (v1) at (0.417,0.75) {};
  \node (v2) at (0.27,0.211) {};
  \node (v3) at (0.879,0.299) {};
  \node (v4) at (0.121,0.846) {} edge (v1);
  \node (v5) at (0.59,0.154) {} edge (v2) edge (v3);
  \node (v6) at (0.473,0.443) {} edge (v1) edge (v2) edge (v5);
  \node (v7) at (0.183,0.531) {} edge (v1) edge (v2) edge (v4) edge (v6);
}}
\tikzset{/graphs en masse/add graph={
  \node (v1) at (0.564,0.233) {};
  \node (v2) at (0.831,0.81) {};
  \node (v3) at (0.169,0.769) {};
  \node (v4) at (0.243,0.15) {} edge (v1);
  \node (v5) at (0.497,0.85) {} edge (v2) edge (v3);
  \node (v6) at (0.642,0.547) {} edge (v1) edge (v2) edge (v5);
  \node (v7) at (0.32,0.467) {} edge (v1) edge (v3) edge (v4) edge (v6);
}}
\tikzset{/graphs en masse/add graph={
  \node (v1) at (0.392,0.403) {};
  \node (v2) at (0.841,0.597) {};
  \node (v3) at (0.283,0.894) {};
  \node (v4) at (0.465,0.106) {} edge (v1);
  \node (v5) at (0.159,0.2) {} edge (v1) edge (v4);
  \node (v6) at (0.592,0.804) {} edge (v2) edge (v3);
  \node (v7) at (0.701,0.313) {} edge (v1) edge (v2) edge (v4);
}}
\tikzset{/graphs en masse/add graph={
  \node (v1) at (0.359,0.38) {};
  \node (v2) at (0.712,0.743) {};
  \node (v3) at (0.125,0.579) {} edge (v1);
  \node (v4) at (0.903,0.482) {} edge (v2);
  \node (v5) at (0.097,0.257) {} edge (v1) edge (v3);
  \node (v6) at (0.615,0.451) {} edge (v1) edge (v2) edge (v4);
  \node (v7) at (0.408,0.697) {} edge (v1) edge (v2) edge (v3) edge (v6);
}}
\tikzset{/graphs en masse/add graph={
  \node (v1) at (0.5,0.764) {};
  \node (v2) at (0.581,0.079) {};
  \node (v3) at (0.95,0.521) {};
  \node (v4) at (0.05,0.321) {};
  \node (v5) at (0.25,0.921) {} edge (v1);
  \node (v6) at (0.807,0.265) {} edge (v2) edge (v3);
  \node (v7) at (0.335,0.242) {} edge (v2) edge (v4);
  \node (v8) at (0.795,0.772) {} edge (v1) edge (v3);
}}
\newcommand{\ol}{\overline}
\newcommand{\bit}{\begin{itemize}}
\newcommand{\eit}{\end{itemize}}
\newcommand{\ben}{\begin{enumerate}}
\newcommand{\een}{\end{enumerate}}
\newcommand{\beq}{\begin{equation}}
\newcommand{\eeq}{\end{equation}}
\newcommand{\bea}{\begin{eqnarray*}}
\newcommand{\eea}{\end{eqnarray*}}
\newcommand{\bpf}{\begin{proof}}
\newcommand{\epf}{\end{proof}\ms}
\newcommand{\bmt}{\begin{bmatrix}}
\newcommand{\emt}{\end{bmatrix}}
\newcommand{\ms}{\medskip}
\newcommand{\lc}{\left\lceil}
\newcommand{\rc}{\right\rceil}
\newcommand{\noi}{\noindent}
\title{Upper bounds for positive semidefinite propagation time}
\author[L. Hogben]{Leslie Hogben}
\thanks{Department of Mathematics, Iowa State University, Ames, IA 50011, USA and American Institute of Mathematics, San Jose, CA 95112, USA, hogben@aimath.org}
\author[M. Hunnell]{Mark Hunnell}
\thanks{Department of Mathematics, Winston-Salem State University, Winston-Salem, NC 27110 hunnellm@wssu.edu}
\author[K. Liu]{Kevin Liu}
\thanks{Department of Mathematics, University of Washington, Seattle, WA 98195 kliu15@uw.edu}
\author[H. Schuerger]{Houston Schuerger}
\thanks{Department of Mathematics, Trinity College, Hartford, CT 06106, houston.schuerger@my.trincoll.edu}
\author[B. Small]{Ben Small}
\thanks{University Place, WA, bentsm@gmail.com}
\author[Y. Zhang]{Yaqi Zhang} 
\thanks{Department of Mathematics, Drexel University, Philadelphia, PA 19104, yz844@drexel.edu}
\author[L. Hogben]{Leslie Hogben}
\address{Department of Mathematics, Iowa State University, Ames, IA 50011 and American Institute of Mathematics, San Jose, CA 95112}
\email{hogben@aimath.org}
\author[M. Hunnell]{Mark Hunnell}
\address{Department of Mathematics, Winston-Salem State University, Winston-Salem, NC 27110}
\email{hunnellm@wssu.edu}
\author[K. Liu]{Kevin Liu}
\address{Department of Mathematics, University of Washington, Seattle, WA 98195}
\email{kliu15@uw.edu}
\author[H. Schuerger]{Houston Schuerger}
\address{Department of Mathematics, Trinity College, Hartford, CT 06106}
\email{houston.schuerger@trincoll.edu}
\author[B. Small]{Ben Small}
\address{University Place, WA 98466}
\email{bentsm@gmail.com}
\author[Y. Zhang]{Yaqi Zhang} 
\address{Department of Mathematics, Drexel University, Philadelphia, PA 19104}
\email{yz844@drexel.edu}
\begin{document}

\maketitle
\vspace{-10pt}\begin{center}\today\end{center}

\begin{abstract} The tight upper bound $\ptp(G)\leq \lc \frac{|V(G)|-\Zp(G)}{2} \rc$ 
is established for the positive semidefinite propagation time of a graph  in terms of its positive semidefinite zero forcing number.  To prove this bound, two methods of transforming one positive semidefinite zero forcing set into another and algorithms implementing these methods are presented.  Consequences of the bound, including a tight Nordhaus-Gaddum sum upper bound on  positive semidefinite propagation time, are established.
\end{abstract}\bigskip

\noi {\bf Keywords} PSD propagation time, PSD zero forcing, migration, Nordhaus-Gaddum 

\noi{\bf AMS subject classification} 05C57, 05C35, 05C69, 05C85

\section{Introduction}

Zero forcing was introduced in \cite{AIM} to provide an upper bound for the maximum nullity
of symmetric matrices described by a graph,  and independently in \cite{graphinfect} in the study of control of quantum systems.  Zero forcing starts with a set of blue vertices and uses a color change rule to color the remaining vertices blue (this is called forcing).  The propagation time of a graph was introduced formally in 2012 by Hogben et al.~\cite{proptime} and  Chilakamarri et al.~\cite{Chil12}.  The propagation time of a zero forcing set is the number of time steps needed to fully color a graph blue when performing independent forces simultaneously, and the propagation time of a graph is the minimum of the propagation times over minimum zero forcing sets.

Positive semidefinite (PSD) forcing was defined in \cite{smallparam} to provide an upper bound for the maximum nullity of positive semidefinite matrices described by a graph (precise definitions of PSD forcing and other terms used throughout  are given at the end of this introduction). PSD forcing was studied more extensively in \cite{ekstrand} and Warnberg introduced the study of PSD propagation time in \cite{warnberg}.  It is well known that the propagation time of a path is one less than its order, and other families of graphs attain propagation time close to the order of the graph.  However, the behavior for PSD propagation time is very different.  Warnberg showed in \cite{warnberg} that the number of graphs  that have propagation time at least $|V(G)|-2$ is finite, but did not provide an upper bound on PSD propagation time that is tight for graphs of arbitrarily large order.

In this paper, we give two proofs of a tight upper bound on the  PSD propagation time of a graph, 
\beq\label{eq-ub}\ptp(G)\leq \lc \frac{|V(G)|-\Zp(G)}{2} \rc.\eeq  
This bound generalizes the next (well-known) result.

\begin{remark}
\thlabel{TreePropTimeLemma}
If $T$ is a tree of order $n$, then  $\ptp(T) \leq \lc \frac{n-1}{2} \rc$
with equality when $T$ is a path, since a blue vertex PSD forces every white neighbor in a tree.
\end{remark}

The bound \eqref{eq-ub} implies that $\ptp(G)\leq \frac n 2$ for a graph of order $n$ and   that there are only a finite number of graphs having $\ptp(G)\ge |V(G)|-k$  for any fixed natural number $k$ (see Section \ref{s:consequences}). The techniques used to prove \eqref{eq-ub} involve transforming one PSD forcing set into another, thereby reducing the propagation time if it was greater than $\lc \frac{|V(G)|-\Zp(G)}{2} \rc$.  In Section \ref{svm} a single vertex in the PSD  forcing set is exchanged, whereas in Section \ref{mvm} multiple vertices are exchanged. Both these techniques are called migration.  Algorithms using migration methods to transform any minimum PSD forcing set into one that achieves the bound in \eqref{eq-ub} are presented in Sections \ref{svm} and \ref{mvm}.
In Section \ref{s:consequences} we also derive additional consequences of the bound \eqref{eq-ub}, including tight Nordhaus-Gaddum sum bounds on PSD propagation time.

In the remainder of this introduction we provide precise definitions and introduce notation. 
A (simple) \emph{graph} is denoted by $G=(V(G),E(G))$ where  $V(G)$ is the finite nonempty set of \emph{vertices} of $G$ and  $E(G)$ is the \emph{edge set} of $G$; an edge is a two-element set of vertices and the edge $\{u,v\}$ can also be denoted by $uv$ (or $vu$). 
The \emph{order} of $G$ is the number of vertices $|V(G)|$. In a graph, two vertices $u$ and $v$ are \emph{adjacent} if $uv$ is an edge and each of $u$ and $v$ is \emph{incident} to $uv$. The \emph{degree} $\deg_G(v)$ of a vertex $v$ is the number of vertices that are adjacent to $v$ in $G$; when the context is clear, the subscript is omitted.  Vertex $u$ is a \emph{neighbor} of $v$ if $vu\in E(G)$ and the \emph{neighborhood} of  $v$ is $N(v)=\{u\in V(G): vu\in E(G)\}$.  
 For $U\subseteq V(G)$, the \emph{subgraph  of $G$ induced by $U$}, denoted by $G[U]$, is the graph obtained from $G$ by removing all vertices not in $U$ and their incident edges; $G[U]$ is also called an 
\emph{induced subgraph} of $G$.  For $S\subseteq V(G)$, $G-S=G[V(G)\setminus S]$, i.e., the subgraph obtained from $G$ by deleting vertices in $S$ and incident edges.   A \emph{path in $G$} is a sequence of distinct vertices $v_0,v_1,\dots, v_k$ such that $v_{i-1}v_i\in E(G)$ for $i=1,\dots, k$.  A graph is \emph{connected} if for two distinct vertices $v$ and $u$ there is a path from $v$  to $u$ (and thus also from $u$ to $v$).  A \emph{component} of a graph is a maximal connected induced subgraph. A \emph{path} (or \emph{path graph}) $P_n$ is  a graph with $V(P_n)=\{v_1,\dots,v_n\}$ and $E(P_n)=\{v_iv_{i+1}: i=1,\ldots,n-1\}$.  
A \emph{complete graph} $K_n$   is a graph with $V(K_n)=\{v_1,\dots,v_n\}$ and $E(K_n)=\{v_iv_{j}: 1\le i<j\le n\}$.

Each variant of zero forcing   is a process.  At every stage of the process, each vertex is either blue or white.  A white vertex may change color to blue at some step, but a blue vertex will remain blue during all subsequent steps.  Each variant of zero forcing  is determined by a color change rule that defines when a vertex may change the color of a white vertex to blue, i.e., perform a force. 
  The \emph{standard  color change rule} is: A blue vertex $u$ can change the color of a white vertex $w$ to blue  if $w$ is the unique white neighbor of $u$.   A force $u\to w$ using the standard color change rule  is called a \emph{standard force}.
Let $B$ be the set of blue vertices (at a particular stage of the process),  and let $W_1,\dots, W_k$ be the sets of vertices of the $k\ge 1$ components of $G-B$.  
 The \emph{PSD color change rule} is: If $u\in  B$, $w\in W_i$, and $w$ is the only white neighbor of  $u$ in $G[W_i\cup B]$, then change the color of  $w$ to blue. A force $u\to w$ using the PSD color change rule is called a \emph{PSD force}. Note that it is possible that there is only one component of $G-B$, and in that case a PSD force is the same as a standard force.   A set that can color every vertex in the graph blue by repeated applications of the PSD forcing rule is a \emph{PSD forcing set} and the minimum cardinality of a PSD forcing set of $G$ is the \emph{PSD zero forcing number}  $\Zp(G)$. Given a  PSD forcing set $B$, a set  of forces $\F$ that colors the entire graph blue, and a vertex  $b\in B$, define $V_b$ to be the set of vertices  $w$ such that  there is a sequence of forces  $b=v_1\to v_2\to\dots\to v_k=w$ in $\F$ (the empty sequence of forces to reach $w$ is permitted, i.e., $b\in V_b$).
The \emph{forcing tree}  of $b$ is the induced subgraph $G[V_b]$.
 
Starting with a set $B\subseteq V(G)$ of blue vertices, we define two sequences of sets, the set $B^{(i)}$ of vertices that are forced (change color from white to blue) at time step $i$   and  the set $B^{[i]}$ of vertices that are blue after time step $i$. Thus $B^{[0]}=B^{(0)}=B$ is the set of  vertices that are blue initially and after each subsequent time step $i+1$ we have $B^{[i+1]}=B^{[i]}\cup B^{(i+1)}$. To construct $B^{(i+1)}$ (and thus $B^{[i+1]}$), if $B^{(i)}$ and $B^{[i]}$ have been determined, then 
\[B^{(i+1)}=\{w: 
\mbox{ $w$ can be  PSD forced by some vertex (given the vertices in  $B^{[i]}$ are blue)}\}.\] 
 The   \emph{PSD propagation time} of $B\subseteq V(G)$, denoted by $\ptp(G;B)$, is the least $t$ such that $B^{[t]}=V(G)$, or infinity if $B$ is not a PSD forcing set of $G$.
The \emph{PSD propagation time of  $G$}, $\ptp(G)$,  is
 \[\ptp(G)=\min\{\ptp(G;B) : |B|=\Zp(G)\}.\] 
We also define  the \emph{$k$-PSD propagation time of $G$}  to be
$ \ptp(G,k)=\min_{|B|= k}\ptp(G;B),$ so
$\ptp(G)=\ptp(G,\Zp(G)).$

\section{Single-vertex migration}
\label{svm}
For a graph $G$, we denote the set of connected components of $G$ by $\comp(G)$.  A  {\em valid initial PSD force for $S$} is a PSD force that is valid when only the vertices of $S$ have been colored blue.

\begin{observation}
\thlabel{PSDForceSwitch}
Let $G$ be a graph, $S\subset V(G)$,  $v,w\in V(G)\setminus S$, $vw \in E(G)$ and $v\neq w$.  The following are equivalent:

\begin{itemize}
\item $v\to w$ is a valid initial PSD force for $S\cup\{v\}$;
\item The removal of $vw$ from $G-S$ disconnects $v$ and $w$;
\item $vw$ is a bridge in $G-S$; and
\item $w\to v$ is a valid initial PSD force for $S\cup\{w\}$.
\end{itemize}
\end{observation}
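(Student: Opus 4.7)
The plan is to handle the four-way equivalence in three stages: first note that (2) and (3) are literally the same statement, then reduce (1)$\iff$(4) to (1)$\iff$(3) by symmetry of the bridge condition in $v$ and $w$, and finally prove (1)$\iff$(3), which carries the actual content.

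For the preliminary steps, I would observe that because $vw\in E(G)$ and $v,w\notin S$, the vertices $v$ and $w$ are adjacent in $G-S$ and hence lie in the same component of $G-S$. Therefore removing the edge $vw$ increases the number of components of $G-S$ exactly when it separates $v$ from $w$, giving (2)$\iff$(3) directly. Since the statement that $vw$ is a bridge of $G-S$ is symmetric in $v$ and $w$, once (1)$\iff$(3) is established, the same argument with the roles of $v$ and $w$ swapped yields (4)$\iff$(3).

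To prove (1)$\iff$(3), I would first unpack the PSD color change rule for the initial configuration $B=S\cup\{v\}$: letting $W$ denote the component of $G-(S\cup\{v\})$ containing $w$, the force $v\to w$ is valid iff $w$ is the unique white neighbor of $v$ in $G[W\cup B]$, which since every vertex of $S\cup\{v\}$ is blue simplifies to: $w$ is the only neighbor of $v$ that lies in $W$. For (3)$\Rightarrow$(1), suppose $vw$ is a bridge of $G-S$ and, toward contradiction, that some neighbor $x\neq w$ of $v$ also lies in $W$; then concatenating the edge $vx$ with a path from $x$ to $w$ in $G[W]$ produces a $v$--$w$ walk in $G-S$ that avoids the edge $vw$, contradicting the bridge property. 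For (1)$\Rightarrow$(3), assume the force is valid and that $vw$ is not a bridge; then there exists a $v$--$w$ path in $(G-S)-vw$, whose first edge $vx_1$ has $x_1\neq w$, and the remainder of the path from $x_1$ to $w$ avoids $v$, placing $x_1$ in the component $W$. Hence $x_1\in W$ is a neighbor of $v$ distinct from $w$, contradicting uniqueness.

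The main obstacle is simply transcribing the PSD color change rule correctly in the special case $B=S\cup\{v\}$: identifying that ``$w$ is the only white neighbor of $v$ in $G[W\cup B]$'' reduces to ``$w$ is the only neighbor of $v$ in the component of $G-(S\cup\{v\})$ that contains $w$.'' Once this translation is in hand, each implication is a short connectedness argument, and the remaining equivalences fall out by symmetry and the definition of a bridge.
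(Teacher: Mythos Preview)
Your argument is correct: the translation of the PSD color change rule for $B=S\cup\{v\}$ to ``$w$ is the unique neighbor of $v$ in the component of $G-(S\cup\{v\})$ containing $w$'' is exactly right, and both directions of (1)$\iff$(3) follow as you describe. Note that the paper states this result as an \emph{observation} and gives no proof at all; what you have written is simply a careful unpacking of why the four conditions are equivalent, which the authors evidently regard as immediate from the definitions.
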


The next result is Lemma 2.1.1 in \cite{Peters-thesis}.  We provide a shorter proof for completeness.

\begin{lemma}
\thlabel{PSDMigration}
Let $G$ be a graph, let $B$ be a PSD forcing set of $G$, and let $v\to w$ be a valid initial PSD force for $B$.  Then $B'=(B\setminus\{v\})\cup\{w\}$ is a PSD forcing set for $G$.
\end{lemma}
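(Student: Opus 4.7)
The plan is to use Observation~\thref{PSDForceSwitch} to show that $B'$ can perform the ``reverse'' force $w\to v$, thereby reaching a superset of $B$, and then argue that supersets of PSD forcing sets are PSD forcing sets.

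First, apply Observation~\thref{PSDForceSwitch} with $S=B\setminus\{v\}$. Since $v\in B$ and $w\notin B$ (as $w$ is being forced from white), both $v$ and $w$ lie in $V(G)\setminus S$, and $S\cup\{v\}=B$. The hypothesis gives that $v\to w$ is a valid initial PSD force for $S\cup\{v\}$, so the observation yields that $w\to v$ is a valid initial PSD force for $S\cup\{w\}=(B\setminus\{v\})\cup\{w\}=B'$. Performing this force as the first step of the PSD forcing process from $B'$ produces the blue set $B'\cup\{v\}=B\cup\{w\}$.

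Next, I would verify the general monotonicity statement: if $C$ is a PSD forcing set for $G$ and $C\subseteq C^*$, then $C^*$ is a PSD forcing set for $G$. To see this, mimic any forcing sequence from $C$ starting at $C^*$: at each step the blue set is a superset of the corresponding blue set from $C$, so every white neighbor of a forcing vertex $u$ under $C^*$ is already a white neighbor under $C$; thus the uniqueness condition in the PSD color change rule is preserved (any force from $C$ whose target is already blue can simply be skipped). Applying this with $C=B$ and $C^*=B\cup\{w\}$ shows $B\cup\{w\}$ is a PSD forcing set.

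Combining the two steps: starting from $B'$ and performing $w\to v$ at time step~$1$ yields the blue set $B\cup\{w\}$, which is itself a PSD forcing set, so continuing the process colors the entire graph blue. Hence $B'$ is a PSD forcing set for $G$. The only subtlety worth spelling out is the monotonicity observation, since the PSD rule involves the component structure of $G$ minus the blue set; once one notes that adding blue vertices can only shrink a vertex's white neighborhood, the component-based formulation causes no obstacle.
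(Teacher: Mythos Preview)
Your proof is correct and follows essentially the same route as the paper: apply \thref{PSDForceSwitch} with $S=B\setminus\{v\}$ to get the reverse force $w\to v$, then note that since $B$ is contained in the blue set after one step from $B'$, the rest of $G$ can be colored. The paper simply asserts the monotonicity step (``$B$ is in the final coloring of $B'$''), whereas you spell it out explicitly, including the component subtlety for the PSD rule.
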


\begin{proof}
By \thref{PSDForceSwitch} (applied to $S=B\setminus\{v\}$), $w\to v$ is a valid initial PSD force for $B'=(B\setminus\{v\})\cup\{w\}$.  Thus, $B$ (which PSD forces $G$) is in the final coloring of $B'$, so $B'$ is a PSD forcing set for $G$.
\end{proof}

We call the process of switching $v$ and $w$ in \thref{PSDMigration} \emph{single-vertex migration}. This is illustrated in Figure \ref{singlevertexmigration}.

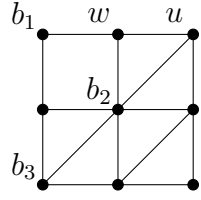
\begin{figure}[h]
    \centering
    \begin{tikzpicture}
    \filldraw[fill=black,draw=black] (1,0) circle (2pt);
    \filldraw[fill=black,draw=black] (2,0) circle (2pt);
    \filldraw[fill=black,draw=black] (1,1) circle (2pt);
    \filldraw[fill=black,draw=black] (2,1) circle (2pt);
    \filldraw[fill=black,draw=black] (1,2) circle (2pt);
    \filldraw[fill=black,draw=black] (2,2) circle (2pt);
    \filldraw[fill=black,draw=black] (3,0) circle (2pt);
    \filldraw[fill=black,draw=black] (3,1) circle (2pt);
    \filldraw[fill=black,draw=black] (3,2) circle (2pt);
    \draw (1,0) -- (3,0) -- (3,2) -- (1,2) -- (1,0) -- (3,2);
    \draw (2,0) -- (3,1);
    \draw (2,2) -- (2,0);
    \draw (1,1) -- (3,1);
    \node at (0.75,2.25) {$b_1$};
    \node at (0.75,0.25) {$b_3$};
    \node at (1.75,2.25) {$w$};
    \node at (1.75,1.25) {$b_2$};
    \node at (2.75,2.25) {$u$};
    \end{tikzpicture}
    \caption{For the graph $G$ shown above with PSD forcing set $B=\{b_1,b_2,b_3\}$, single-vertex migrations allow us to obtain several new PSD forcing sets. Since $b_1\to w$ at the first time step, one possibility is $B'=\{w,b_2,b_3\}$. A subsequent migration with $w \to u$ produces the PSD forcing set $B''=\{u,b_2,b_3\}$.}
    \label{singlevertexmigration}
\end{figure}

When starting with a PSD forcing set $B$,  a force must happen at time step $i$ within each component of $G-B^{[i-1]}$. This leads to the next observation.

\begin{observation}
\thlabel{SeparationBound} For any graph $G$ and PSD forcing set $B$,
\[\pt_+(G;B)=\max_{C\in\comp(G-B)}\pt_+(G[V(C)\cup B];B)\leq\max_{C\in\comp(G-B)}|C|.\]
\end{observation}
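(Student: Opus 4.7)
The plan is to decompose the PSD forcing process on $G$ starting from $B$ into independent processes on the subgraphs $G[V(C) \cup B]$, one for each $C \in \comp(G-B)$, and then bound the length of each subprocess by $|C|$.

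For the equality, fix $C \in \comp(G-B)$ and let $B^{[i]}$ and $\wh B^{[i]}$ denote the blue-vertex sets after time step $i$ when the PSD process is run on $G$ and on $G[V(C) \cup B]$, respectively, both initialized at $B$. I would prove by induction on $i$ that $B^{[i]} \cap V(C) = \wh B^{[i]} \cap V(C)$. The base case is immediate. For the inductive step, the key structural fact is that because $C$ is a component of $G-B$, no edges join $V(C)$ to $V(G) \setminus (B \cup V(C))$; consequently every component of $G - B^{[i]}$ that meets $V(C)$ is contained in $V(C)$. Therefore whether a white $w \in V(C)$ can be PSD-forced by some $u$ depends only on which vertices of $V(C) \cup B$ are blue, and the induction hypothesis makes these sets agree in the two processes. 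Taking the maximum over all components yields $\ptp(G;B) = \max_{C \in \comp(G-B)} \ptp(G[V(C) \cup B]; B)$.

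For the inequality I would show that each component process terminates in at most $|V(C)|$ steps. The decomposition above implies that $B$, being a PSD forcing set of $G$, is also a PSD forcing set of $G[V(C) \cup B]$. As long as $V(C)$ still contains a white vertex the process cannot stall, so at least one vertex of $V(C)$ changes color each time step; hence every vertex of $V(C)$ is blue after at most $|V(C)|$ steps, giving $\ptp(G[V(C) \cup B]; B) \leq |V(C)|$.

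The only real obstacle is the structural claim that components of $G - B^{[i]}$ meeting $V(C)$ remain inside $V(C)$ throughout the process; granting this, both halves of the statement follow directly from the definition of the PSD color change rule.
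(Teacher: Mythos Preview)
Your proposal is correct and aligns with the paper's reasoning. In the paper this result is stated as an Observation with no formal proof beyond the single remark that ``a force must happen at time step $i$ within each component of $G-B^{[i-1]}$''; you have simply written out the details (the inductive decomposition into independent component processes and the at-least-one-force-per-step bound) that the paper leaves implicit.
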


The next lemma exhibits a critical property of single-vertex migration that permits iterative progress towards achieving the bound \eqref{eq-ub}.  

\begin{lemma}
\thlabel{HalfwayFS}
Let $G$ be a graph of order $n$ that has a PSD forcing set $B$ of size $k$ such that $\max_{C\in\comp(G-B)}|V(C)|>\lc\frac{n-k}{2}\rc$. Then there exists a PSD forcing set $B'$ such that $|B'|=k$ and $\max_{C\in\comp(G-B')}|V(C)|<\max_{C\in\comp(G-B)}|V(C)|$.
\end{lemma}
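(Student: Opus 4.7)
The plan is to perform a single-vertex migration that removes one vertex from the largest component of $G-B$ while not creating any new component of comparable size. Let $C$ be a component of $G-B$ of maximum size, and set $m=|V(C)|$, so by hypothesis $m > \lc\frac{n-k}{2}\rc$.

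First I would argue that some valid initial PSD force $b\to w$ exists with $w\in V(C)$. Since $B$ is a PSD forcing set of $G$ and $C$ is a component of $G-B=G-B^{[0]}$, the process must eventually force the vertices of $C$; in particular, at time step $1$ some white vertex of $C$ must be forced (this is exactly what Observation \thref{SeparationBound} uses). That force has the required form $b\to w$ with $b\in B$ and $w\in V(C)$. Define $B'=(B\setminus\{b\})\cup\{w\}$. By Lemma \thref{PSDMigration}, $B'$ is a PSD forcing set of $G$ of size $k$.

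The main step is to bound the component sizes of $G-B'$. Because $b\to w$ is a valid initial PSD force for $B$ and $w\in V(C)$, the PSD color change rule tells us that $w$ is the \emph{unique} neighbor of $b$ in $V(C)$ (the whole component $C$ is white at the initial coloring). Now $V(G-B')=(V(G-B)\setminus\{w\})\cup\{b\}$, and since $b$ has no neighbor in $V(C)\setminus\{w\}$, every component of $G-B'$ is either
\begin{itemize}
\item contained in $V(C)\setminus\{w\}$, in which case it has size at most $m-1$, or
\item contained in $\{b\}\cup\bigl(V(G-B)\setminus V(C)\bigr)$, in which case it has size at most $1+(n-k-m)$.
\end{itemize}

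Finally I would verify the arithmetic. Since $m$ is a positive integer with $m>\lc\frac{n-k}{2}\rc$, we have $m\geq\lc\frac{n-k}{2}\rc+1\geq\frac{n-k}{2}+1$, hence $1+(n-k-m)\leq\frac{n-k}{2}<m$. Combined with the trivial bound $m-1<m$, this shows $\max_{C'\in\comp(G-B')}|V(C')|<m=\max_{C\in\comp(G-B)}|V(C)|$, as required. I expect the only subtle point of the argument to be recognizing that the valid initial PSD force rule forces $w$ to be the unique neighbor of $b$ in the whole component $C$ (not just among the blue vertices), which is precisely what makes the component containing $b$ in $G-B'$ avoid all of $V(C)\setminus\{w\}$; everything else is bookkeeping.
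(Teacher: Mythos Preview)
Your proof is correct and follows essentially the same approach as the paper: pick a first force $b\to w$ into the largest component $C$, migrate to $B'=(B\setminus\{b\})\cup\{w\}$ via Lemma~\thref{PSDMigration}, and then use the fact that $w$ is the \emph{only} neighbor of $b$ in $C$ to split the components of $G-B'$ into those inside $V(C)\setminus\{w\}$ and those inside $\{b\}\cup(V(G-B)\setminus V(C))$, finishing with the same arithmetic. Your write-up is slightly more explicit about why the component containing $b$ avoids $V(C)\setminus\{w\}$, but the argument is the paper's argument.
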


\begin{proof}
 Let $C_0$ be the largest component of $G-B$.  Note that $|V(C_0)|\geq \frac{n-k}{2}+1$ and thus $|V(G)\setminus(V(C_0)\cup S)|\le \frac{n-k}{2}-1$.  Observe that $B$ must be able to force directly into $C_0$ (or else it could not force $G$); let $v\to w$ be a first force into $C_0$.  By single-vertex migration, the set $B'=(B\setminus\{v\})\cup\{w\}$ PSD forces $G$.  Then $|B'|=|B|=k$, and $\comp(C_0-\{w\})\subseteq\comp(G-B')$ as the removal of $B$ disconnects $C_0$ from the rest of $G$ and $N(v)\cap C_0=\{w\}$. Furthermore, $\max_{C\in\comp(G-B')}|V(C)|<|V(C_0)|=\max_{C\in\comp(G-B)}|V(C)|$ because both $\max_{C\in\comp(C_0-\{w\})}|V(C)|<|V(C_0)|$ and \[|V(G)\setminus(V(C_0)\cup B')|= |V(G)\setminus(V(C_0)\cup B)|+1\le \frac{n-k}{2}-1+1=\frac{n-k}{2}<|V(C_0)|.\qedhere\]
\end{proof}

\begin{theorem}
\thlabel{nover2}
Let $G$ be a graph of order $n$, and let $\Z_+(G)\leq k\leq n$.  Then $\pt_+(G,k)\leq\left\lceil\frac{n-k}{2}\right\rceil$.
\end{theorem}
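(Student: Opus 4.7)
The plan is to iterate Lemma \ref{HalfwayFS} (\thref{HalfwayFS}) to manufacture a PSD forcing set of size $k$ whose removal leaves only small components, then apply Observation \ref{SeparationBound} (\thref{SeparationBound}) to bound the propagation time.

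First I would start from any PSD forcing set $B_0$ of $G$ with $|B_0|=k$; such a set exists because $\Z_+(G)\le k\le n$ (one may take a minimum PSD forcing set and add arbitrary vertices until reaching size $k$). Set $m(B):=\max_{C\in\comp(G-B)}|V(C)|$, with the convention $m(B)=0$ if $B=V(G)$. If $m(B_0)\le \lc\frac{n-k}{2}\rc$, we are already done. Otherwise, \thref{HalfwayFS} produces a PSD forcing set $B_1$ with $|B_1|=k$ and $m(B_1)<m(B_0)$.

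Iterating this procedure yields a sequence $B_0,B_1,B_2,\dots$ of PSD forcing sets of $G$, each of size $k$, along which $m(B_i)$ is strictly decreasing. Since $m$ takes values in the nonnegative integers, the iteration must terminate after finitely many steps, producing a PSD forcing set $B^*$ of size $k$ for which the hypothesis of \thref{HalfwayFS} fails; that is,
\[ m(B^*)=\max_{C\in\comp(G-B^*)}|V(C)|\le \lc\tfrac{n-k}{2}\rc. \]
Applying \thref{SeparationBound} to $B^*$ then gives
\[ \pt_+(G;B^*)\le \max_{C\in\comp(G-B^*)}|V(C)| \le \lc\tfrac{n-k}{2}\rc. \]
Since $|B^*|=k$, this yields $\pt_+(G,k)\le \pt_+(G;B^*)\le \lc\frac{n-k}{2}\rc$, as desired.

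The only step that requires care is the termination argument, and it is essentially automatic: $m(B_i)$ is a strictly decreasing sequence of nonnegative integers, so the loop stops. No obstacle beyond invoking the previously established migration lemma is anticipated, since \thref{HalfwayFS} already does all the combinatorial work of exhibiting the single-vertex swap that shrinks the largest component, and \thref{SeparationBound} handles the conversion from component size to propagation time.
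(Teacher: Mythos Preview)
Your proposal is correct and follows exactly the paper's approach: repeatedly apply \thref{HalfwayFS} to shrink the largest component of $G-B$ until it has at most $\lceil(n-k)/2\rceil$ vertices, then invoke \thref{SeparationBound}. You have simply made the termination argument and the existence of a size-$k$ PSD forcing set a bit more explicit than the paper does.
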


\begin{proof}
Let $B$ be a PSD forcing set of $G$ of size $k$.    By applying \thref{HalfwayFS} repeatedly (if needed), there exists a PSD forcing set $B^\star$ of size $k$ such that $\max_{C\in\comp(G-B^\star)}|V(C)|\leq\left\lceil\frac{n-k}{2}\right\rceil$.  Then $\pt_+(G;B^\star)\leq\max_{C\in\comp(G-B^\star)}|V(C)|\leq\left\lceil\frac{n-k}{2}\right\rceil$.
\end{proof}

Observe that the bound in \thref{nover2}  is a refinement of \eqref{eq-ub}. 

\begin{corollary}\thlabel{c:nover2}
For every graph $G$  of order $n$, 
\[\pt_+(G)\leq\lc\frac{n-\Z_+(G)}{2}\rc\le \lc\frac{n-1}{2}\rc\le \frac n 2.\]
\end{corollary}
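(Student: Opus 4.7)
The plan is to derive all three inequalities directly from results already in hand; this is essentially an unpacking of \thref{nover2} at the specific value $k=\Z_+(G)$, together with two elementary bounds on the ceiling.

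For the first (and main) inequality, I would simply invoke \thref{nover2} with $k=\Z_+(G)$. This choice is legitimate since by definition $\Z_+(G)\leq\Z_+(G)\leq n$, so the hypothesis $\Z_+(G)\leq k\leq n$ is satisfied. The theorem yields
\[
\pt_+(G,\Z_+(G))\;\leq\;\lc\tfrac{n-\Z_+(G)}{2}\rc,
\]
and the introduction records that $\pt_+(G)=\pt_+(G,\Z_+(G))$, which is exactly the first claimed inequality.

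For the middle inequality, I need $\Z_+(G)\geq 1$, which holds for every (nonempty) graph since the empty set performs no forces. Because the map $x\mapsto \lc (n-x)/2\rc$ is monotone non-increasing in $x$, substituting $\Z_+(G)\geq 1$ gives $\lc (n-\Z_+(G))/2\rc\leq\lc (n-1)/2\rc$. The last inequality is immediate: if $n$ is even then $\lc (n-1)/2\rc=n/2$, and if $n$ is odd then $\lc (n-1)/2\rc=(n-1)/2<n/2$.

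There is essentially no obstacle here — all the real work was done in establishing \thref{HalfwayFS} and \thref{nover2}. The only thing to double-check is that the corollary is meant to allow $n=0$ or $n=1$ as edge cases; but the standing convention in the paper (that $V(G)$ is a finite \emph{nonempty} set) together with $\Z_+(G)\geq 1$ for $n\geq 1$ handles these uniformly.
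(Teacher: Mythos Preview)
Your proposal is correct and matches the paper's treatment: the corollary is stated without proof there, since it follows immediately from \thref{nover2} with $k=\Zp(G)$ together with the elementary observations $\Zp(G)\ge 1$ and $\lc(n-1)/2\rc\le n/2$ that you spell out.
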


The proofs of \thref{HalfwayFS} and \thref{nover2} provide the basis for the next algorithm, which modifies a PSD forcing set $B$ to obtain  $B^\star$ such that $|B^\star|=|B|$ and $\max_{C\in\comp(G-B^\star)}|V(C)|\leq\left\lceil\frac{|V(G)|-|B^\star|}{2}\right\rceil$.

The PSD forcing set returned by this algorithm achieves the bound in \thref{nover2}.
\newpage

\begin{algorithm}[h!]
\flushleft 
\caption{ \\
\textbf{Input:} graph $G$, PSD forcing set $B$ for $G$ \\
\textbf{Output:} PSD forcing set $B^\star$ for $G$ with $|B^\star|=|B|$ and  \\$\max_{C\in \comp(G-B^\star)}|V(C)|\leq \lc \frac{|V(G)|-|B^\star|}{2} \rc$}
\label{alg1}

\begin{algorithmic}[1]
\STATE{$B^\star\coloneqq B$}
\STATE{$C_0\coloneqq$ component of $G\setminus B^\star$ with the most vertices}
\WHILE{$|V(C_0)|>\lc\frac{|V(G)|-|B^\star|}{2}\rc$}
    \STATE{$v,w\coloneqq$ a pair of vertices in $B^\star$ and $C_0$ such that $v\rightarrow w$ at the first time step}
    \STATE{$B^\star\coloneqq (B^\star\setminus \{v\})\cup \{w\}$}
    \STATE{$C_0\coloneqq$ component of $G\setminus B^\star$ with the most vertices}
\ENDWHILE
\RETURN{$B^\star$}
\end{algorithmic}
\end{algorithm}

\section{Multiple-vertex migration}
\label{mvm}

In this section we present an additional technique for modifying PSD forcing sets and use it to give an alternate proof of Theorem \ref{nover2}.   An example of the technique in  \thref{time1shift} is shown in Figure \ref{time1shiftex}. 

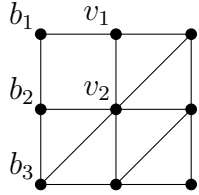
\begin{figure}[!hb]
    \centering
    \begin{tikzpicture}
    \filldraw[fill=black,draw=black] (1,0) circle (2pt);
    \filldraw[fill=black,draw=black] (2,0) circle (2pt);
    \filldraw[fill=black,draw=black] (1,1) circle (2pt);
    \filldraw[fill=black,draw=black] (2,1) circle (2pt);
    \filldraw[fill=black,draw=black] (1,2) circle (2pt);
    \filldraw[fill=black,draw=black] (2,2) circle (2pt);
    \filldraw[fill=black,draw=black] (3,0) circle (2pt);
    \filldraw[fill=black,draw=black] (3,1) circle (2pt);
    \filldraw[fill=black,draw=black] (3,2) circle (2pt);
    \draw (1,0) -- (3,0) -- (3,2) -- (1,2) -- (1,0) -- (3,2);
    \draw (2,0) -- (3,1);
    \draw (2,2) -- (2,0);
    \draw (1,1) -- (3,1);
    \node at (0.75,2.25) {$b_1$};
    \node at (0.75,1.25) {$b_2$};
    \node at (0.75,0.25) {$b_3$};
    \node at (1.75,2.25) {$v_1$};
    \node at (1.75,1.25) {$v_2$};
    \end{tikzpicture}
    \caption{For the graph $G$ shown above with PSD forcing set $B=\{b_1,b_2,b_3\}$, the method in \thref{time1shift} allows us to obtain the PSD forcing set $B'=\{v_1,v_2,b_3\}$ with $\ptp(G;B')=\ptp(G;B)-1$.}
    \label{time1shiftex}
\end{figure}

\begin{lemma}
\thlabel{time1shift}
Let $G$ be a graph with PSD forcing set $B$ such that $G-B$ is connected.  Let $B'$ be the endpoints of the PSD forcing trees after the first time step. Then $B'$ is another PSD forcing set of $G$ with $|B|=|B'|$. Furthermore, if $\pt_+(G;B)\geq 2$, then $\pt_+(G;B')=\pt_+(G;B)-1$.
\end{lemma}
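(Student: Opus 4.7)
The plan is to fix a forcing schedule $\F$ for $B$ that realizes the PSD propagation time, let $F\subseteq B$ be the set of forcers at time step $1$ in $\F$, and for each $f\in F$ denote by $w_f$ the unique vertex that $f$ forces. Because $G-B$ is connected, the PSD color change rule at the first step coincides with the standard rule, so each $f\in F$ has $w_f$ as its unique white neighbor in $G$, and distinct elements of $F$ force distinct targets. Consequently $B' = (B\setminus F)\cup\{w_f:f\in F\}$ is a disjoint union with $|B'|=|B|$.

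The key structural observation to prove is that for each $f\in F$ we have $N(w_f)\cap F=\{f\}$, and moreover $N(f)\subseteq B\cup\{w_f\}=B^{[1]}$. The first identity follows by contradiction: any $f'\in F\setminus\{f\}$ adjacent to $w_f$ would have $w_f$ as a white neighbor, forcing $w_f=w_{f'}$ (since $f'$'s unique white neighbor is $w_{f'}$), contradicting the distinctness of the targets. The second identity is immediate. Consequently $G-B'$ has no edge between $F$ and $V(G)\setminus B^{[1]}$, so its components partition into components of $G[F]$ and components of $G[V(G)\setminus B^{[1]}]$. This structure implies each reverse force $w_f\to f$ is valid under $B'$ at time $0$, since $f$ is the unique white neighbor of $w_f$ in the $G[F]$-component containing $f$. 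Performing all of these forces simultaneously shows $B^{[1]}\subseteq B'^{[1]}$, which in particular establishes that $B'$ is a PSD forcing set.

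For the propagation-time upper bound, I would prove by induction on $t\geq 1$ that $B^{[t+1]}\subseteq B'^{[t]}$. In the inductive step, every force $u\to w$ at step $t+1$ from $B$ has forcer $u\in B^{[t]}\subseteq B'^{[t-1]}$ (by the inductive hypothesis, and using the crucial fact that no $f\in F$ can serve as a forcer at any step after step $1$, since $N(f)\subseteq B^{[1]}$ leaves $f$ with no white neighbors). The force remains valid at step $t$ from $B'$ because the component of $w$ in $G-B'^{[t-1]}$ is contained in the component of $w$ in $G-B^{[t]}$, so the unique-white-neighbor condition is inherited. Setting $t=\ptp(G;B)-1$ yields $\ptp(G;B')\leq\ptp(G;B)-1$.

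For the matching lower bound, observe that $B'\subseteq B^{[1]}$ directly from the definition. Monotonicity of the PSD forcing process with respect to enlarging the initial blue set (proved by the same sub-component argument used above) yields $\ptp(G;B^{[1]})\leq\ptp(G;B')$; since the semigroup identity $(B^{[1]})^{[s]}=B^{[s+1]}$ gives $\ptp(G;B^{[1]})=\ptp(G;B)-1$, we conclude $\ptp(G;B')=\ptp(G;B)-1$. The main obstacle is the structural analysis of $G-B'$, especially the claim $N(w_f)\cap F=\{f\}$, which crucially relies on the connectedness of $G-B$; without this hypothesis, PSD forces at step $1$ need only be locally unique within a component, and the neighborhood restrictions used in the shift argument can fail.
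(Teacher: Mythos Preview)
Your argument is correct and follows the same structural route as the paper: both establish $N(w_f)\cap F=\{f\}$ and $N(f)\subseteq B\cup\{w_f\}$, deduce that $\comp(G-B')$ decomposes into components of $G[F]$ and components of $G-B^{[1]}$, and conclude that each reverse force $w_f\to f$ is valid from $B'$. For the propagation-time equality the paper works in the subgraph $H=G-(B\setminus B')$ and asserts a one-step shift directly, whereas you give an explicit induction for the upper bound and invoke monotonicity through $B'\subseteq B^{[1]}$ for the lower bound; one caution is that your inductive step writes ``$u\in B^{[t]}\subseteq B'^{[t-1]}$'', but this containment is false at $t=1$ (since $F\not\subseteq B'$), so the base case must be handled separately via $u\in B^{[1]}\setminus F=B'$ together with the component decomposition you already established---all the ingredients are present, only the exposition of the base case needs to be isolated.
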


\begin{proof}
Note that connectedness of $G- B$ implies that no vertex performs more than one force at the first time step. Let $B=\{b_1,b_2,\ldots,b_k\}$ and $B'=\{v_1,v_2,\ldots,v_j,b_{j+1},\ldots,b_k\}$ where $b_i\to v_i$ at the first time step for $i\le j$, and no other forces occur at the first time step. Notice that $|B'|=|B|$, and we claim that $B'$ is a PSD forcing set.

We first show that for $i\le j$, the only vertex in $B\setminus B'$ that is adjacent to $v_i$ is $b_i$.  If $v_i$ is adjacent to $b\in B$ such that $b\ne b_i$, then we have two cases:
\begin{itemize}
    \item If $b$ is not adjacent to any other vertex in $G -B$, then $b$ did not perform a force in $G$ during the first time step when using $B$ in the chosen forcing process.
    \item If $b$ is adjacent to some other vertex in $G-B$, then $b$ had more than one neighbor in $G- B$ when $B$ was selected as the PSD forcing set. Again, $b$ does not perform a force in $G$ during the first time step.
\end{itemize}
In both cases, we see that $b\in B'$ since $b$ did not perform a force during the first time step. Hence, the only vertex in $B\setminus B'$ that is adjacent to $v_i$ is $b_i$.

 Since $G- B$ is connected and $b_i$ performed a force when $B$ was chosen as a PSD forcing set, the only neighbors of $b_i\in B\setminus B'$ are other elements of $B$ and the vertex $v_i$. 
 
{This means that $\comp(G-B')$ can be partitioned into $\comp(G-(B \cup B'))$ and $\comp(G[B \setminus B'])$.  As a result, $b_i$ is the unique neighbor of $v_i$ in the component of $G- B'$ containing $b_i$.} Therefore, when $B'$ is chosen as an initial set of blue vertices, $v_i\to b_i$ at the first time step.  Since all of $B$ will be blue by the end of the first time step and $B$ is a PSD forcing set, we conclude that $B'$ is also a PSD forcing set of $G$.

Now suppose that $\pt_+(G;B)\geq 2$. Let $H=G-(B\setminus B')$. From the preceding paragraph, we observe that $B'$ begins forcing vertices in $H$ at the first time step since $B\setminus B'$ is disconnected from $H- B'$. The forcing steps in $H$ are then the same as  when $B$ was the initial PSD forcing set, but shifted by one time step. Since $\pt_+(G;B)\geq 2$, we know $\pt_+(H;B')\geq 1$, and this allows us to conclude that \[\pt_+(G;B')=\max\{1,\ptp(H;B')\}=\ptp(H;B')=\pt_+(G;B)-1. \qedhere\]
\end{proof}

\begin{remark}
In Lemma \ref{time1shift}, if we define $B''=\{v_1,v_2,\ldots,v_{j'},b_{j'+1},\ldots,b_k\}$ with $j'<j$, then the same argument shows that $B''$ is a PSD forcing set, though we cannot guarantee the second result $\ptp(G;B'')=\ptp(G;B)-1$.  Choosing $j'=1$ and combining this with the next lemma generalizes single-vertex migration.
\end{remark}

  Since PSD forcing occurs independently in the components of $G-B$, we can apply \thref{time1shift}  within the closed neighborhood of one component of $G-B$. We call this process of replacing $B$ with $B'$ within the closed neighborhood of one component (as described  in Lemma \ref{componentshift})  \emph{multiple-vertex migration}.
  The assumption in  \thref{time1shift} that $G- B$ is connected cannot be removed without such a restriction. Figure \ref{connected} illustrates both multiple-vertex migration (using one component) and a failure when moving vertices in more than one component.

\begin{figure}[h]
    \centering
    \begin{tikzpicture}
    \filldraw[fill=black,draw=black] (0,0) circle (2pt);
    \filldraw[fill=black,draw=black] (1,0) circle (2pt);
    \filldraw[fill=black,draw=black] (2,0) circle (2pt);
    \filldraw[fill=black,draw=black] (0,1) circle (2pt);
    \filldraw[fill=black,draw=black] (1,1) circle (2pt);
    \filldraw[fill=black,draw=black] (2,1) circle (2pt);
    \filldraw[fill=black,draw=black] (0,2) circle (2pt);
    \filldraw[fill=black,draw=black] (1,2) circle (2pt);
    \filldraw[fill=black,draw=black] (2,2) circle (2pt);
    \filldraw[fill=black,draw=black] (-1,0) circle (2pt);
    \filldraw[fill=black,draw=black] (-1,1) circle (2pt);
    \filldraw[fill=black,draw=black] (-1,2) circle (2pt);
    \filldraw[fill=black,draw=black] (3,0) circle (2pt);
    \filldraw[fill=black,draw=black] (3,1) circle (2pt);
    \filldraw[fill=black,draw=black] (3,2) circle (2pt);
    \draw (0,0)--(0,2)--(2,2)--(2,0)--(0,0);
    \draw (0,1)--(2,1)--(1,0)--(1,2)--(0,0)--(1,1);
    \draw (0,0)--(-1,0)--(-1,1)--(0,1);
    \draw (-1,1)--(-1,2)--(0,2);
    \draw (2,0)--(3,0)--(3,1)--(2,1);
    \draw (3,1)--(3,2)--(2,2);
    \node at (1.25,2.25) {$b_1$};
    \node at (1.25,1.25) {$b_2$};
    \node at (0.75,0.25) {$b_3$};
    \node at (2.25,2.25) {$v_1$};
    \node at (2.25,1.25) {$v_2$};
    \node at (-0.25,0.25) {$v_3$};
    \end{tikzpicture}
    \caption{Notice that $B=\{b_1,b_2,b_3\}$ is a PSD forcing set, but $\{v_1,v_2,v_3\}$ is not.  However, we can construct the PSD forcing sets $\{b_1,b_2,v_3\}$ and $\{v_1,v_2,b_3\}$.}
    \label{connected}
\end{figure}
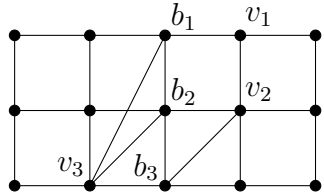

\begin{lemma}
\thlabel{componentshift}
Let $G$ be a graph with PSD forcing set $B$. Let $C$ be a connected component of $G- B$, and let $H=G[V(C)\cup B]$. If $B'$ is the set of endpoints  in $H$ of the PSD forcing trees after the first time step, then $B'$ is another PSD forcing set of $G$ with $|B'|=|B|$.
\end{lemma}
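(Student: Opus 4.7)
The plan is to reduce \thref{componentshift} to \thref{time1shift} applied to $H$. First I would verify that $B$ is itself a PSD forcing set of $H$. Fix a PSD forcing process for $G$ starting from $B$; whenever a vertex $w\in V(C)$ is PSD-forced at some step $i$, the component $W$ of $G-B^{[i-1]}$ containing $w$ is contained in $V(C)$, since $B\subseteq B^{[i-1]}$ and $C$ is a whole component of $G-B$. Hence $G[W\cup B]=H[W\cup B]$, so the force is equally valid as a PSD force in $H$. Restricting this process to $C$ witnesses that $B$ PSD-forces $H$. Since $H-B=C$ is connected, \thref{time1shift} now applies to $(H,B)$; writing the first-time-step forces as $b_1\to v_1,\dots,b_j\to v_j$, the lemma gives that $B'=(B\setminus\{b_1,\dots,b_j\})\cup\{v_1,\dots,v_j\}$ is a PSD forcing set of $H$ with $|B'|=|B|$. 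These first-step forces in $H$ coincide with the first-step forces into $C$ in any PSD forcing process on $G$, so this $B'$ matches the set described in the statement.

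It remains to promote $B'$ to a PSD forcing set of $G$. I would do this by showing that each $v_i\to b_i$ is a valid first-step PSD force for $B'$ in $G$: once $b_1,\dots,b_j$ all turn blue at time $1$, the full set $B$ is blue, and since $B$ PSD-forces $G$ so does $B'$. To check validity, let $W_i$ be the component of $G-B'$ containing $b_i$, and set $R=V(G)\setminus V(H)$. Because $C$ is a full component of $G-B$ there are no $G$-edges between $V(C)$ and $R$, so $v_i$'s neighbors outside $B'$ all lie in $V(H)\setminus B'=(V(C)\setminus\{v_1,\dots,v_j\})\cup\{b_1,\dots,b_j\}$. The proof of \thref{time1shift} applied to $H$ shows that among $\{b_1,\dots,b_j\}$ only $b_i$ is adjacent to $v_i$, and that each $b_\ell$ has no neighbor in $V(C)\setminus\{v_1,\dots,v_j\}$. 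Hence $W_i$ is disjoint from $V(C)\setminus\{v_1,\dots,v_j\}$, so $v_i$'s unique neighbor in $W_i$ is $b_i$, and $v_i\to b_i$ is a valid PSD force.

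I expect the main obstacle to be exactly this last component analysis. Passing from $H$ to $G$ gives each $b_i$ potentially many new neighbors in $R$, and one must confirm that those extra neighbors are absorbed into $W_i$ without producing any additional white neighbor of $v_i$ in that same component. The fact that $v_i\in V(C)$ has no neighbor in $R$ is exactly what keeps those new edges from polluting the force $v_i\to b_i$; without that separation (as Figure \ref{connected} shows for simultaneous migrations in two components) the argument would fail.
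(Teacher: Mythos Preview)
Your proposal is correct and follows essentially the same route as the paper: apply \thref{time1shift} to $H$ (where $H-B=C$ is connected), then use the fact that no vertex of $V(C)$ has a neighbor in $R=V(G)\setminus V(H)$ to argue that each $v_i\to b_i$ remains a valid first-step PSD force in $G$, whence $B$ (and hence all of $G$) turns blue. The paper's proof is terser, asserting in one line that ``$B'$ will force any white vertices in $B$ at the first step when forcing within $G$''; your explicit check that $B$ PSD-forces $H$ and your component analysis showing $W_i\subseteq\{b_1,\dots,b_j\}\cup R$ simply fill in details the paper leaves implicit.
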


\begin{proof}
By definition of $H$, we know  $H- B$ is connected. Using Lemma \ref{time1shift}, $B'$ is a PSD forcing set for $H$ with $|B|=|B'|$. Notice that the vertices in $H- B$ are not adjacent to any vertices in $G- V(H)$. From this, we see that $B'$ will force  any white vertices in $B$  at the first step when forcing within $G$. Therefore  $B'$ will force $G$ since $B$ is a PSD forcing set of $G$. Thus $B'$ is a PSD forcing set of $G$.
\end{proof}

A PSD forcing set $B$ with $|B|=k$ is called \textit{$k$-efficient} if $\pt_+(G;B)=\pt_+(G,k)$. When $k=\Z_+(G)$,  $B$ is said to be \textit{efficient}. An application of the previous result allows us to conclude that for $k$-efficient PSD forcing sets, the two components that take the longest to force should take approximately the same time.

\begin{theorem}
\label{closeproptime}
Let $G$ be a graph and let $B$ be a PSD forcing set with $|B|=k$. Let $C_1,C_2,\ldots,C_m$ be the connected components of $G-B$, indexed so that
$\pt_+(G[V(C_i)\cup B];B)\leq \pt_+(G[V(C_{i+1})\cup B];B)$ for $i=1,2,\ldots,m-1$. If $B$ is $k$-efficient, then \[\pt_+(G[V(C_{m})\cup B];B)-\pt_+(G[V(C_{m-1})\cup B];B)\leq 1,\]
where we use the convention $C_1=\emptyset$ and $C_2=G-B$ when $G- B$ is connected.
\end{theorem}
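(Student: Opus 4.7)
My plan is to prove the contrapositive: assuming $\ptp(G[V(C_m) \cup B]; B) - \ptp(G[V(C_{m-1}) \cup B]; B) \ge 2$, I will produce a PSD forcing set $B'$ of size $k$ with $\ptp(G; B') < \ptp(G; B)$, contradicting $k$-efficiency. Write $t_i = \ptp(G[V(C_i) \cup B]; B)$; \thref{SeparationBound} gives $\ptp(G; B) = t_m$. The degenerate case when $G - B$ is connected (so by the stated convention $m = 2$ with $t_1 = 0$, making the hypothesis $\ptp(G; B) \ge 2$) is handled immediately by \thref{time1shift}, which supplies such a $B'$ with $\ptp(G; B') = \ptp(G; B) - 1$.

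For the main case I would apply multiple-vertex migration (\thref{componentshift}) to the slowest component $C_m$: letting $b_1, \dots, b_j$ be the vertices of $B$ performing forces $b_i \to v_i$ into $C_m$ at time step $1$, I set $B' = (B \setminus \{b_i : 1 \le i \le j\}) \cup \{v_i : 1 \le i \le j\}$, which has size $k$. To bound $\ptp(G; B')$ I will use \thref{SeparationBound} and analyze each $W \in \comp(G - B')$. Since every $x \in V(C_m) \setminus \{v_i : 1 \le i \le j\}$ has all $G$-neighbors inside $V(C_m) \cup B$, the set $V(C_m) \setminus \{v_i\}$ is cut off from the $b_i$'s in $G - B'$. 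Hence each $W$ is either \emph{internal} (contained in $V(C_m) \setminus \{v_i\}$) or \emph{external} (consisting of one or more $b_i$'s together with the $C_{j'}$'s, $j' < m$, linked to those $b_i$'s by $G$-edges).

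For internal $W$, the forcing of $G[V(W) \cup B']$ from $B'$ matches the corresponding piece of the forcing of $H := G[V(C_m) \cup B]$ from $B'$, so $\ptp(G[V(W) \cup B']; B') \le \ptp(H; B') = t_m - 1$ by \thref{time1shift} applied to $H$. For an external $W$, I will verify that (a) at time step $1$ each $v_i \in W$ validly forces $b_i$, since the original force $b_{i'} \to v_{i'}$ required $v_{i'}$ to be the unique white neighbor of $b_{i'}$ in $V(C_m)$, which prevents any $b_{i'}$ with $i' \ne i$ from being adjacent to $v_i$ and so makes $b_i$ the unique white neighbor of $v_i$ in $W$; and (b) for each $C_{j'} \subseteq W$, any $b_i$ adjacent to $C_{j'}$ necessarily lies in $W$ (otherwise it could not be separated from $C_{j'}$ in $G - B'$), so after time step $1$ the blue neighbors of $C_{j'}$ equal $N_G(C_{j'})$ and the $C_{j'}$'s remain distinct components of the remaining white subgraph. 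Consequently the forcing of each $C_{j'}$ from time step $2$ onward replays the $B$-process on $G[V(C_{j'}) \cup B]$ move for move, finishing by time $1 + t_{j'} \le 1 + t_{m-1}$, so $\ptp(G[V(W) \cup B']; B') \le 1 + t_{m-1}$.

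Combining the two bounds with the hypothesis $t_m - t_{m-1} \ge 2$ will give $\ptp(G; B') \le \max(t_m - 1,\, 1 + t_{m-1}) \le t_m - 1 < t_m$, the desired contradiction. The main technical obstacle I anticipate is claim (b) for external components: I need to rule out the possibility that stripping the $b_i$'s from the initial blue set permanently disrupts the forcing inside some $C_{j'}$. The two observations above handle this — any $b_i$ outside $W$ cannot have been adjacent to $C_{j'}$ in the first place, and the $b_i$'s inside $W$ are re-blued at time step $1$ via the $v_i \to b_i$ forces — so from time step $2$ onward the forcing dynamics inside each $C_{j'}$ are unchanged, merely offset by a single step.
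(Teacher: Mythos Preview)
Your proposal is correct and follows essentially the same approach as the paper: prove the contrapositive by applying multiple-vertex migration (\thref{componentshift}) to the slowest component $C_m$, invoke \thref{time1shift} to get $\ptp(G_m;B')=t_m-1$, and argue the remaining components finish by time $1+t_{m-1}$, yielding $\ptp(G;B')=\max\{t_m-1,\,t_{m-1}+1\}=t_m-1$. The only difference is presentational: the paper dispatches the external-component bound in one line (``since $B'$ forces $B$ at the first time step and $B$ is a PSD forcing set, vertices in $G-V(G_m)$ are blue by time $t_{m-1}+1$''), whereas you unpack this by re-verifying that each $v_i\to b_i$ is a valid first force in $G$ and that the forcing inside each $C_{j'}$ then replays the $B$-process shifted by one step---but this is exactly the content already established in the proofs of \thref{time1shift} and \thref{componentshift}, so no new idea is needed.
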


\begin{proof}
Define $G_i=G[V(C_i)\cup B]$.  Notice that
\[\pt_+(G;B)=\max_{i=1,2,\ldots ,m}\pt_+(G_i;B)=\pt_+(G_m;B).\]
We prove the contrapositive, so suppose that $\pt_+(G_m;B)-\pt_+(G_{m-1};B)> 1$. 
Using Lemma \ref{componentshift}, if we let $B'$ be the endpoints  in $G_m$ of the PSD forcing trees  after the first time step, then $B'$ is a PSD forcing set of $G$. Additionally, since $\pt_+(G_m;B)-\pt_+(G_{m-1};B)> 1$, nonnegativity of propagation time implies $\pt_+(G_m,B)\geq 2$. The vertices of $G_m-B$ are not adjacent to any vertices in $G- G_m$, so Lemma \ref{time1shift} implies 
\[\pt_+(G_m;B')=\pt_+(G_m;B)-1.\] Since $B'$ forces $B$ at the first time step and $B$ is a PSD forcing set for $G$, we also see that the vertices in $G- V(G_m)$ will be blue by time \[\pt_+(G_{m-1};B)+1.\] Since we assumed $\pt_+(G_m;B)-\pt_+(G_{m-1};B)>1$, we see that 
\bea
        \pt_+(G;B') & =& \max\{\pt_+(G_m;B)-1,\pt_+(G_{m-1};B)+1\} \\
        & =&\pt_+(G_m;B)-1 \\
        & =&\pt_+(G;B)-1.
    \
\eea
Thus, $B$ cannot be $k$-efficient. 
\end{proof}

Theorem \ref{closeproptime} can be used to give another independent proof of Theorem \ref{nover2}.

\begin{proof}[Alternate proof of Theorem \ref{nover2}]
Let $B$ be a $k$-efficient PSD forcing set of $G$. Using the notation as in Theorem \ref{closeproptime}, the assumption that $B$ is $k$-efficient implies \[\pt_+(G_m;B)-\pt_+(G_{m-1};B)\leq 1.\] Propagation in $G_m$ and $G_{m-1}$ occur independently, so $\pt_+(G_m;B)-\pt_+(G_{m-1};B)\leq 1$ implies that at each time step, at least one force occurs in $G_m$ and at least one force occurs in $G_{m-1}$, except possibly during the last step. Since at least two forces occur at each time step with the possible exception of the last time step, we conclude that 
\[\pt_+(G,k)= \pt_+(G;B) \leq  \left\lceil \frac{|V(G)|-k}{2}\right\rceil.\qedhere\]
\end{proof}

The proof of Theorem \ref{closeproptime}  provides the basis for an algorithm for finding a PSD forcing set such that $\pt_+(G[V(C_{m})\cup B];B)-\pt_+(G[V(C_{m-1})\cup B];B)\leq 1$ holds, which we   present next. The PSD forcing set returned by this algorithm achieves the bound in Theorem \ref{nover2}, though it is not necessarily $k$-efficient.

\begin{algorithm}[h]
\flushleft 
\caption{\\
\textbf{Input:} graph $G$, PSD forcing set $B$ for $G$ \\
\textbf{Output:} PSD forcing set $B'$ for $G$ with $|B'|=|B|$ such that the two components of $G\setminus B'$ that take the longest to propagate will finish propagating within one time step of each other}
\label{alg2}
\begin{algorithmic}[1]
\STATE{$B'\coloneqq B$}
\STATE{$G_1\coloneqq$ subgraph of $G$ induced by $B'$}
\STATE{$m\coloneqq |\comp(G-B')|+1$}
\STATE{$G_2,\ldots,G_m\coloneqq$ subgraphs induced by the components of $G- B'$ combined with the vertices in $B'$, indexed so that $\ptp(G_i;B')\leq \ptp(G_{i+1};B')$ for $i=2,3\ldots,m-1$}
\WHILE{$\ptp(G_m)-\ptp(G_{m-1})\geq 2$}
    \STATE{$b_1,b_2,\ldots,b_j\coloneqq$ vertices in $B'$ that perform a force in $G_m$ at the first time step}
    \STATE{$v_1,v_2,\ldots v_j\coloneqq$ vertices of $G_m$ such that $b_i\rightarrow v_i$ at the first time step}
    \STATE{$B'\coloneqq (B'\cup \{v_1,v_2,\ldots,v_j\})\setminus \{b_1,b_2,\ldots,b_j\}$}
    \STATE{$m\coloneqq |\comp(G-B')|$}
    \STATE{$G_1,\ldots,G_m\coloneqq $ subgraphs induced by the components of $G-B'$ combined with the vertices in $B'$, indexed so that $\ptp(G_i;B')\leq \ptp(G_{i+1};B')$ for $i=1,2,\ldots,m-1$}
\ENDWHILE
\RETURN{$B'$}
\end{algorithmic}
\end{algorithm}

\newpage

\section{Consequences of the bound}\label{s:consequences}

In this section we derive several consequences of \thref{nover2}, including a tight upper bound on the PSD throttling number and tight Nordhaus-Gaddum sum bounds for PSD propagation time.  We begin by showing that the number of graphs having PSD propagation time within a fixed amount of the order is finite.

\begin{corollary}
\thlabel{finiteness}
Let $k\in \mathbb{N}$. For any graph $G$ with $|V(G)|\geq 2k+1$,
\[\pt_+(G)< |V(G)|-k.\]
The number of graphs with $\pt_+(G)\geq |V(G)|-k$ is therefore finite.
\end{corollary}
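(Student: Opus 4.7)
The plan is to derive both statements directly from Corollary \thref{c:nover2}. Writing $n = |V(G)|$, that corollary already gives $\pt_+(G) \leq \lc \frac{n-1}{2} \rc$, which coincides with $\lf \frac{n}{2} \rf$ (verified by splitting on the parity of $n$). Thus the proof reduces to a purely arithmetic comparison, and I do not expect any substantive obstacle.

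First I would prove the inequality $\pt_+(G) < n - k$ under the hypothesis $n \geq 2k+1$. It suffices to show that $\lf n/2 \rf \leq n-k-1$, which rearranges to $k+1 \leq \lc n/2 \rc$. Since $n \geq 2k+1$, we have $\lc n/2 \rc \geq \lc (2k+1)/2 \rc = k+1$, so combining with Corollary \thref{c:nover2} gives
\[\pt_+(G) \leq \lf n/2 \rf \leq n-k-1 < n-k,\]
as claimed. A quick sanity check at the boundary $n = 2k+1$ yields $\lc (n-1)/2 \rc = k < k+1 = n-k$, while at $n = 2k$ the inequality already fails, confirming that the hypothesis $n \geq 2k+1$ is sharp.

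For the second assertion, I would argue by contrapositive: any graph $G$ satisfying $\pt_+(G) \geq |V(G)| - k$ must have $|V(G)| \leq 2k$, by the first part. Since there are only finitely many (simple) graphs on at most $2k$ vertices, the family of such graphs is finite. The ``hardest'' step is really just recognizing that Corollary \thref{c:nover2} is precisely the tool that does the work; once it is invoked, the remainder is elementary arithmetic together with the observation that there are finitely many graphs of bounded order.
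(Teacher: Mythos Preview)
Your proof is correct and follows essentially the same approach as the paper: invoke \thref{c:nover2} to get $\ptp(G)\le\lc\frac{n-1}{2}\rc$, then check arithmetically that this is strictly less than $n-k$ once $n\ge 2k+1$, and conclude finiteness from the bound $|V(G)|\le 2k$. The only differences are cosmetic (you rewrite $\lc\frac{n-1}{2}\rc$ as $\lf\frac n2\rf$ and spell out the arithmetic a bit more carefully).
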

\begin{proof}
By \thref{c:nover2}, $\pt_+(G)\leq   \lc\frac{|V(G)|-1}{2}\rc.$
For any graph with $|V(G)|\geq 2k+1$, 
\[\pt_+(G)\leq \left\lceil \frac{|V(G)|-1}{2} \right\rceil< |V(G)|-k.\]
Since there are only finitely many graphs with $|V(G)|\leq 2k$, this implies that the number of graphs with $\pt_+(G)\geq |V(G)|-k$ is finite.
\end{proof}

Warnberg \cite{warnberg} characterized the graphs achieving $\pt_+(G)\geq |V(G)|-k$ for $k=1,2$, thereby establishing the results in Corollary \ref{finiteness} for $k=1,2$.
\begin{theorem}\thlabel{warnberg-small}\cite{warnberg}
Let $G$ be a graph.
\begin{enumerate}
\item   $\ptp(G)=|V(G)|-1$ if and only if $G=P_2$.
\item   $\ptp(G)=|V(G)|-2$ if and only if $G$ is one of $P_3$, $P_4$, $C_3$, or $P_2\dcup P_1$, where $G\dcup H$ denotes the disjoint union of $G$ and $H$.  
\end{enumerate}
\end{theorem}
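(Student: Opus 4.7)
My plan is to apply \thref{c:nover2} to force both $|V(G)|$ and $\Zp(G)$ into a very small range, then enumerate the few remaining isomorphism classes by hand. Writing $n=|V(G)|$ and supposing $\ptp(G)=n-k$, combining \thref{c:nover2} with the elementary inequality $\lc(n-\Zp(G))/2\rc\le (n-\Zp(G)+1)/2$ yields
\[n-k\le \lc(n-\Zp(G))/2\rc\le \lc(n-1)/2\rc,\]
from which one extracts both $n\le 2k$ and $\Zp(G)\le 2k-n+1$. For (1) this forces $n\le 2$ and, when $n=2$, $\Zp(G)=1$; for (2) it forces $n\le 4$, with $\Zp(G)=1$ whenever $n=4$.

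For the reverse direction, each of the five listed graphs is handled by a short direct propagation check. The key observation is that in $P_3$ the middle vertex $v_2$ simultaneously forces both endpoints at step one, because $G-\{v_2\}$ splits into two singleton components and $v_2$ is the unique blue neighbor of the single white vertex in each; analogous one- or two-step calculations dispose of $P_2$, $P_4$ (starting from an interior vertex of degree two), $C_3$ (any two vertices), and $P_2\dcup P_1$ (the isolated vertex together with one endpoint of the edge).

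For the forward direction of (1), the only graph of order two with $\Zp=1$ is $P_2$ (the order-one graph $P_1$ is tacitly excluded, since $\ptp(P_1)=0$). For (2), I would treat $n=3$ and $n=4$ separately. When $n=3$, the four isomorphism classes $3K_1$, $P_2\dcup P_1$, $P_3$, $C_3$ can be checked directly and exactly the last three yield $\ptp=1$. When $n=4$, the condition $\Zp(G)=1$ forces $G$ to be connected (no single vertex can reach another component), so the search reduces to the six connected graphs on four vertices; for each of $C_4$, the paw, $K_4-e$, and $K_4$ one verifies that every starting vertex $v$ has at least two white neighbors in a common component of $G-\{v\}$, blocking any initial force and hence giving $\Zp\ge 2$. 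This leaves $P_4$ and $K_{1,3}$, and since $\ptp(K_{1,3})=1\ne 2$ (the center simultaneously forces all three leaves at step one), $P_4$ is the unique remaining graph.

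The main obstacle is the clean enumeration of $\Zp=1$ graphs of order four; ruling out each of $C_4$, the paw, $K_4-e$, and $K_4$ is elementary but must be done systematically rather than invoked without justification.
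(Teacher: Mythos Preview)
The paper does not prove \thref{warnberg-small}; it is quoted from \cite{warnberg} without proof, as a precursor to the paper's own finiteness corollary. So there is no in-paper argument to compare against. Your approach---bounding $n$ and $\Zp(G)$ via \thref{c:nover2} and then enumerating---is a clean self-contained proof that the paper's main bound makes possible; in fact it is essentially what \thref{finiteness} does in the abstract, carried out concretely for $k=1,2$. Warnberg's original proof predates \thref{nover2} and is necessarily more ad hoc.

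One small gap: in part~(2) you jump straight to $n\in\{3,4\}$ without disposing of $n\le 2$. For $n=1$ there is nothing to check ($\ptp=0\neq -1$), but for $n=2$ the edgeless graph $2K_1$ has $\ptp(2K_1)=0=|V|-2$ and is not on the list. This is the same issue you flagged for $P_1$ in part~(1): both are edgeless graphs with $\Zp=n$ and $\ptp=0$, and the stated theorem tacitly excludes them (equivalently, one reads the theorem under the convention $\ptp(G)\ge 1$, i.e., $G$ has at least one edge). You should say this explicitly rather than silently starting the case analysis at $n=3$. With that addition, your argument is complete.
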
  

As the value of $k$ increases, the number of graphs $G$ such that $\pt_+(G) = |V(G)|-k$ grows rapidly. The graphs having $\ptp(G)=|V(G)|-3$ and  $\ptp(G)=|V(G)|-4$ are shown in Figures \ref{f:nminus3} and \ref{f:nminus4}, respectively.

\begin{figure}[!h]
\tikzsetnextfilename{figure_nminus3}
\begin{tikzpicture}[set graph scales={1.17}{1.44}{1.44}]\DrawGraphs{n minus 3}{5x4}\end{tikzpicture}
\caption{Graphs with $\pt_+(G)=|V(G)|-3$\label{f:nminus3}.}
\end{figure}
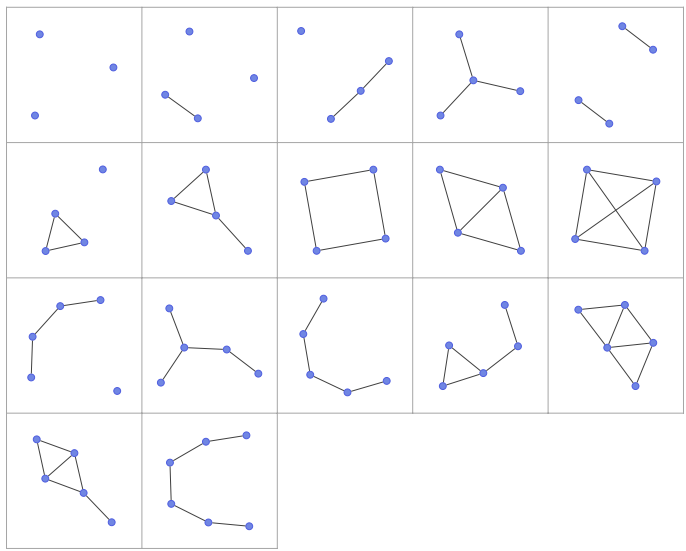

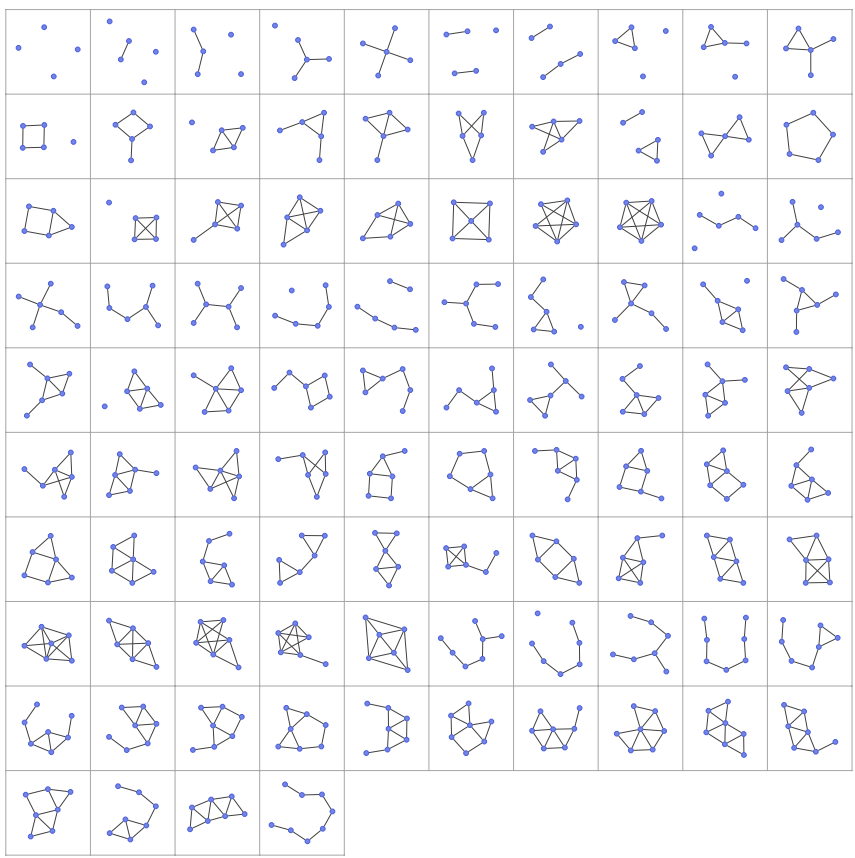
\begin{figure}[!h]
\tikzsetnextfilename{figure_nminus4}
\begin{tikzpicture}[set graph scales={1}{1}{0.9}]\DrawGraphs{n minus 4}{10x10}\end{tikzpicture}
\caption{Graphs with $\pt_+(G)=|V(G)|-4$\label{f:nminus4}.}
\end{figure}

Throttling  minimizes the sum  of the resources used to accomplish a task (number of blue vertices) and the time needed to complete that task (propagation time).  
Theorem \ref{nover2} yields a bound on the \emph{PSD throttling number} of a graph $G$ of order $n$, which is defined to be $\thp(G)=\min_{\Zp(G)\le k\le n}(k+\ptp(G,k))$.
\begin{corollary}
For any graph $G$ on $n$ vertices,
\[\thp(G)\leq 
\left\lceil \frac{n+\Z_+(G)}{2}\right\rceil,\]
and this bound is tight for $K_n$ for all $n$.
\end{corollary}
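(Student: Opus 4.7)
The plan is to deduce this as an immediate corollary of \thref{nover2}. Using the definition $\thp(G) = \min_{\Zp(G) \le k \le n}(k + \ptp(G,k))$, taking $k = \Zp(G)$ gives $\thp(G) \le \Zp(G) + \ptp(G)$. Applying \thref{nover2} at $k = \Zp(G)$ bounds this by $\Zp(G) + \lc \frac{n - \Zp(G)}{2}\rc$, which I would simplify to $\lc \frac{n + \Zp(G)}{2}\rc$ via the elementary identity $a + \lc \frac{b}{2}\rc = \lc \frac{2a + b}{2}\rc$ valid for all integers $a, b$ (easily verified by cases on the parity of $b$).

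For tightness at $K_n$, I would compute both sides explicitly. A standard argument gives $\Zp(K_n) = n - 1$: any set of at most $n - 2$ vertices leaves two white vertices in the single component of $G - B$, and every blue vertex has both as white neighbors, so no PSD force is possible. Hence the bound reads $\lc \frac{n + (n - 1)}{2}\rc = n$. On the throttling side, only $k \in \{n - 1, n\}$ are admissible in the minimum, and $\ptp(K_n, n - 1) = 1$ (the remaining white vertex is forced in one step) while $\ptp(K_n, n) = 0$. Both cases yield $k + \ptp(K_n, k) = n$, so $\thp(K_n) = n$.

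Since both parts are essentially one-line consequences of \thref{nover2} together with well-known facts about $K_n$, there is no substantive obstacle; the corollary serves mainly to highlight an application of the main theorem to throttling.
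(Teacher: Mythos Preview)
Your proposal is correct and follows essentially the same approach as the paper: apply \thref{nover2} at $k=\Zp(G)$ and simplify using $a+\lc b/2\rc=\lc(2a+b)/2\rc$, then verify tightness on $K_n$ using $\Zp(K_n)=n-1$. Your tightness argument is in fact slightly more careful than the paper's, since you explicitly check both admissible values $k=n-1$ and $k=n$ in the throttling minimum.
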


\begin{proof}
By    the definition of throttling and \thref{nover2}, $\thp(G)\le k+\ptp(G,k)\le k+\lc\frac{n-k}{2}\rc=
\lc \frac{n+k}{2}\rc$ for $k=\Zp(G),\dots,n$.
Thus $\thp(G)\le \lc \frac{n+\Z_+(G)}{2}\rc$. 
For tightness,  $\Zp(K_n)=n-1$ and $\ptp(K_n)=1$, so $\thp(K_n)=n=\lc\frac{n+(n-1)}2\rc$. 
\end{proof}

Given a graph $G$, its \emph{complement} $\ol{G}$ is the graph with vertex set $V(G)$ and edge set
\[E \left(\ol{G} \right) = \left\{ uv : u,v \in V(G) \text{ distinct and } uv \not \in E(G) \right \} .\]
 The \emph{Nordhaus-Gaddum sum problem} for a graph parameter $\zeta$ is to determine a  lower or upper bound on $\zeta(G)+\zeta(\ol{G})$ that is tight for graphs of arbitrarily large order. We can use Theorem \ref{nover2} to give a tight Nordhaus-Gaddum sum upper bound for the PSD propagation time of a graph and its complement.   We recall the next (tight) Nordhaus-Gaddum sum bounds for the PSD zero forcing number.

\begin{theorem}\cite{ekstrand}\label{ZpNG}
Let $G$ be a graph of order $n\ge 2$.  Then  $n-2 \leq \Zp(G) + \Zp(\ol{G}) \leq 2n-1$,  and both bounds are tight for arbitrarily large $n$.
\end{theorem}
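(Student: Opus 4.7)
The plan is to handle the two inequalities separately, as they rely on quite different techniques.

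For the upper bound $\Zp(G)+\Zp(\ol G)\le 2n-1$, the approach is a short case analysis. First, $\Zp(H)=|V(H)|$ precisely when $H$ has no edges, since every isolated vertex must lie in any PSD forcing set. When $H$ contains at least one edge $uv$, the set $V(H)\setminus\{v\}$ is a PSD forcing set: $u$ is blue with unique white neighbor $v$ in the sole component of $H-(V(H)\setminus\{v\})$, so $\Zp(H)\le |V(H)|-1$. Since $G$ and $\ol G$ cannot both be edgeless once $n\ge 2$, at most one of them contributes $n$, which gives $\Zp(G)+\Zp(\ol G)\le n+(n-1)=2n-1$. Tightness for every $n\ge 2$ is then witnessed by $G=K_n$, for which $\Zp(K_n)=n-1$ and $\Zp(\ol{K_n})=n$.

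For the lower bound $\Zp(G)+\Zp(\ol G)\ge n-2$, the plan is to pass through the maximum positive semidefinite nullity $M_+$. The PSD color change rule was introduced so that any PSD forcing set yields an upper bound on the maximum nullity of a positive semidefinite matrix whose off-diagonal zero/nonzero pattern is prescribed by $H$, giving $M_+(H)\le \Zp(H)$ for every graph $H$. Combining this with the (already-established) positive semidefinite version of the graph complement theorem $M_+(G)+M_+(\ol G)\ge n-2$ yields
\[ \Zp(G)+\Zp(\ol G)\ \ge\ M_+(G)+M_+(\ol G)\ \ge\ n-2. \]
For tightness at arbitrarily large $n$, I would propose the path family $G=P_n$: we have $\Zp(P_n)=1$, and an explicit construction should show $\Zp(\ol{P_n})=n-3$ by taking $n-3$ well-chosen vertices in the dense complement and using the inequality chain above together with $M_+(P_n)=1$ for the matching lower bound.

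The main obstacle is the lower bound, where the argument hinges on the PSD Nordhaus--Gaddum nullity bound, itself a nontrivial matrix-theoretic result. If one preferred a purely combinatorial proof avoiding this ingredient, the key difficulty would be controlling how a minimum PSD forcing set $B$ of $G$ interacts with a minimum PSD forcing set $B'$ of $\ol G$: the induced-subgraph components of $G-B$ and of $\ol G-B'$ cut the vertex set in incompatible ways, and showing directly that $|V(G)\setminus(B\cup B')|\le 2$ would demand a delicate case analysis of the forcing trees in both graphs. Verifying the value $\Zp(\ol{P_n})=n-3$ that underwrites the tightness claim is similarly delicate, requiring an explicit PSD forcing set construction in the dense graph $\ol{P_n}$.
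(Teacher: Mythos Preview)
This theorem is quoted from \cite{ekstrand} and is not proved in the present paper, so there is no in-paper argument to compare against; your task was effectively to reconstruct the original proof.

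Your upper-bound argument is correct and complete: the characterization $\Zp(H)=|V(H)|$ iff $E(H)=\emptyset$, together with the observation that $G$ and $\ol G$ cannot both be edgeless for $n\ge 2$, gives the bound, and $K_n$ witnesses tightness.

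Your lower-bound strategy via $M_+(H)\le\Zp(H)$ and the PSD graph-complement inequality $M_+(G)+M_+(\ol G)\ge n-2$ is exactly the route taken in \cite{ekstrand}; that nullity inequality is equivalent to $\operatorname{mr}_+(G)+\operatorname{mr}_+(\ol G)\le n+2$, which is established there via Colin de Verdi\`ere-type parameters. So you have correctly identified the key ingredient, though as you note it is a substantial matrix-theoretic fact rather than a combinatorial one.

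The one genuine gap is the tightness of the lower bound. You propose $P_n$ and conjecture $\Zp(\ol{P_n})=n-3$, but you do not verify it, and your suggested method (``the inequality chain above together with $M_+(P_n)=1$'') cannot work: knowing $M_+(P_n)=1$ and $M_+(G)+M_+(\ol G)\ge n-2$ only gives $M_+(\ol{P_n})\ge n-3$, hence $\Zp(\ol{P_n})\ge n-3$; it says nothing about the needed \emph{upper} bound $\Zp(\ol{P_n})\le n-3$, which is the nontrivial direction requiring an explicit PSD forcing set of that size. The original paper \cite{ekstrand} does use paths (and more generally certain trees) for tightness, but supplies the missing construction directly.
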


\begin{theorem}\label{t:NG}
Let $G$ be a graph of order $n \geq 2$.  Then \[1 \leq \ptp(G) + \ptp(\overline{G}) \leq \frac{n}{2} + 2.\]  The lower bound is tight for every $n\ge 2$ and the upper bound is tight for every even $n\ge 8$.
\end{theorem}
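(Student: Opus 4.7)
My approach is to handle the two bounds and their tightness claims in turn.

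\textbf{Lower bound.} Observe that $\ptp(H)=0$ is equivalent to $\Zp(H)=|V(H)|$, which holds iff $H$ has no edges: any edge $uv\in E(H)$ gives $\Zp(H)\le n-1$ via the single force $u\to w$ from $V(H)\setminus\{v\}$. Since $G$ and $\overline{G}$ cannot both be edgeless for $n\ge 2$, at least one of $\ptp(G),\ptp(\overline{G})$ is $\ge 1$, which gives the lower bound. Tightness is witnessed by $G=K_n$: here $\ptp(K_n)=1$ and $\ptp(\overline{K_n})=0$ (all $n$ vertices of $\overline{K_n}$ are isolated and must begin blue), so the sum equals $1$ for every $n\ge 2$.

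\textbf{Upper bound.} I would chain Corollary \ref{c:nover2}, the elementary inequality $\lc a/2\rc\le(a+1)/2$ for integer $a$, and the Nordhaus--Gaddum lower bound of Theorem \ref{ZpNG}:
\begin{align*}
\ptp(G)+\ptp(\overline{G})
&\le \lc\frac{n-\Zp(G)}{2}\rc+\lc\frac{n-\Zp(\overline{G})}{2}\rc\\
&\le \frac{n-\Zp(G)+1}{2}+\frac{n-\Zp(\overline{G})+1}{2}\\
&= \frac{2n-(\Zp(G)+\Zp(\overline{G}))}{2}+1\\
&\le \frac{2n-(n-2)}{2}+1=\frac{n}{2}+2.
\end{align*}

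\textbf{Tightness of the upper bound.} The main obstacle is exhibiting, for every even $n\ge 8$, a graph that saturates every inequality above. Equality requires simultaneously: (a) the Nordhaus--Gaddum extremal case $\Zp(G)+\Zp(\overline{G})=n-2$ of Theorem \ref{ZpNG}, (b) both $n-\Zp(G)$ and $n-\Zp(\overline{G})$ odd (so both ceilings contribute the full $+1/2$), and (c) both $\ptp(G)$ and $\ptp(\overline{G})$ meeting the Corollary \ref{c:nover2} bound exactly. For even $n$, condition (b) forces both $\Zp(G)$ and $\Zp(\overline{G})$ to be odd, so natural subcases are $(\Zp(G),\Zp(\overline{G}))=(1,n-3)$ or $(3,n-5)$. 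The first is ruled out quickly: the Remark on trees forces $G=P_n$, yet $V(P_n)\setminus\{v_{i-1},v_i,v_{i+1}\}$ (for any $3\le i\le n-2$) is a minimum PSD forcing set of $\overline{P_n}$ in which each of the three white vertices has a forcing witness in $B$ (namely $v_{i-2}$, $v_{i+2}$, and any vertex at $P_n$-distance $>1$ from $v_i$), so $\ptp(\overline{P_n})=1$ and the sum is only $n/2+1$. I would therefore look for a construction with $\Zp(G)=3$, such as a long path together with two isolated or pendant vertices or a carefully chosen caterpillar, designed so that $\Zp(\overline{G})=n-5$ and the propagation times realize the ceilings $\lc(n-3)/2\rc$ and $3$ exactly. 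Verifying these four parameter values---in particular ruling out shorter propagation over \emph{every} minimum PSD forcing set of the complement---via a component-wise propagation analysis (potentially aided by the migration machinery of Sections \ref{svm} and \ref{mvm} to reduce to canonical minimum PSD forcing sets) is the technical heart of the tightness argument.
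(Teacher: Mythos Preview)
Your treatment of the lower bound and the upper-bound inequality is correct and essentially identical to the paper's: both arguments combine \thref{c:nover2}, the ceiling estimate, and the lower bound from Theorem~\ref{ZpNG} in the same way, and both use $K_n$ for tightness of the lower bound.

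The genuine gap is in tightness of the upper bound. You correctly extract the equality constraints (both $\Zp$ values odd, summing to $n-2$, and both propagation times meeting the \thref{c:nover2} bound), and you correctly dispose of the $(\Zp(G),\Zp(\overline G))=(1,n-3)$ case by computing $\ptp(\overline{P_n})=1$. But after that you stop: you only \emph{propose} to look for a construction with $\Zp(G)=3$ (``a long path together with two isolated or pendant vertices or a carefully chosen caterpillar'') and explicitly defer the verification as ``the technical heart of the tightness argument.'' That is exactly the part of the theorem that carries the weight, and your suggested candidates are not obviously viable---getting $\ptp(\overline G)=3$ rather than something smaller is delicate, and a simple tree-plus-isolates is unlikely to force the complement's propagation time up to $3$.

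The paper resolves this by exhibiting an explicit family $H_{2k+8}$: a self-complementary core $H_8$ on eight vertices with $\Zp(H_8)=\ptp(H_8)=3$, with two pendant paths of length $k$ attached at opposite corners. It then proves $\Zp(H_{2k+8})=3$, $\ptp(H_{2k+8})=k+3$, and---the hardest step---$\Zp(\overline{H_{2k+8}})=2k+3$ and $\ptp(\overline{H_{2k+8}})=3$. The last equality requires showing that every efficient minimum PSD forcing set of $\overline{H_{2k+8}}$ must contain all the pendant-path vertices (via a connectivity argument on $\overline H - B$ and an application of \thref{time1shift}), after which the problem reduces to $\overline{H_{12}}$, handled by computation. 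Without an explicit family and this level of verification, the tightness claim is unproved.
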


\begin{proof}
  Since $n$ is at least 2, either the graph or its complement has an edge. Therefore either $\ptp(G) \geq 1$ or $\ptp(\ol{G})\geq 1$.  Observe that the lower bound is achieved by the complete graph $K_n$ for $n\geq 2$.

To establish the upper bound:
\bea
    \ptp(G) + \ptp(\ol{G}) &\leq& \lc \frac{n-\Zp(G)}{2} \rc + \lc \frac{n-\Zp(\ol{G})}{2} \rc
    \\     
    &\leq &\frac{n-\Zp(G)}{2} + \frac{n-\Zp(\ol{G})}{2} +1 
    \\
    &\leq & n+1 - \frac{\Zp(G) + \Zp(\ol{G})}{2} \\
    &\leq & n+1 - \frac{n-2}{2} 
    \\
    &\leq & \frac{n}{2} + 2.
\eea

To establish tightness for even $n\geq 8$, consider the graph $H_{2k+8}$  on $2k+8$ vertices (with $k\geq 0$) shown in Figure \ref{fig:H8exp}. 

\begin{figure}[h!]
    \centering
    \begin{tikzpicture}
    \filldraw[fill=black,draw=black] (0,0) circle (2pt);
    \filldraw[fill=black,draw=black] (1,-0.5) circle (2pt);
    \filldraw[fill=black,draw=black] (2,-1) circle (2pt);
    \filldraw[fill=black,draw=black] (1,-1.5) circle (2pt);
    \filldraw[fill=black,draw=black] (0,-2) circle (2pt);
    \filldraw[fill=black,draw=black] (-1,-1.5) circle (2pt);
    \filldraw[fill=black,draw=black] (-2,-1) circle (2pt);
    \filldraw[fill=black,draw=black] (-1,-0.5) circle (2pt);
    \filldraw[fill=black,draw=black] (3,-1) circle (2pt);
    \filldraw[fill=black,draw=black] (-3,-1) circle (2pt);
    \filldraw[fill=black,draw=black] (5,-1) circle (2pt);
    \filldraw[fill=black,draw=black] (-5,-1) circle (2pt);
    \draw (0,0) -- (2,-1) -- (0,-2) -- (-2,-1) -- (0,0) -- (0,-2);
    \draw (-1,-1.5) -- (-1,-0.5) -- (1,-0.5) -- (-1,-1.5) -- (1,-1.5) -- (1,-0.5);
    \node at (4,-1) {$\ldots$};
    \node at (-4,-1) {$\ldots$};
    \draw (2,-1) -- (3.5,-1);
    \draw (-2,-1) -- (-3.5,-1);
    \draw (4.5,-1) -- (5,-1);
    \draw (-4.5,-1) -- (-5,-1);
    \node at (2,-1.375) {$b_0$};
    \node at (-2,-1.375) {$a_0$};
    \node at (3,-1.375) {$b_1$};
    \node at (-3,-1.375) {$a_1$};
    \node at (5,-1.375) {$b_k$};
    \node at (-5,-1.375) {$a_k$};
    \node at (-1.25,-0.25) {$x$};
    \node at (-1.25,-1.75) {$y'$};
    \node at (1.25,-0.25) {$y$};
    \node at (1.25,-1.75) {$x'$};
    \node at (0,0.25) {$z$};
    \node at (0,-2.25) {$z'$};
    \end{tikzpicture}
\caption{The graph $H_{2k+8}$, which has  order $2k+8$ and  $\ptp(H_{2k+8})+\ptp(\ol{H_{2k+8}})=(k+4)+2$.
\label{fig:H8exp}}  
\end{figure}

It is straightforward to verify the following properties of $H_8$:  $\Zp(H_8)=\pt_+(H_8)=3$. For any minimum PSD forcing set $B$ of $H_8$, $a_0\not\in B$ or $b_0\not\in B$, and one of  $a_0$ or $b_0$ is the last vertex forced.
Since $\overline{H_8}\cong H_8$, \[\ptp(H_8)+\ptp(\ol{H_8})=6=\frac{8}{2}+2,\] so $H_8$ gives a tight bound for $n=8$. 

Now assume $k\ge 1$.  By results in \cite{ekstrand}, $\Zp(H_{2k+8})=3$ and any minimum PSD forcing set for $H_{2k+8}$ must contain at least two vertices in $V(H_8)\setminus \{a_0,b_0\}$. If $B$ is a PSD forcing set with two vertices in $V(H_8)\setminus \{a_0,b_0\}$ and  a third vertex $a_i$ from $\{a_1,a_2,\ldots,a_k\}$, then migration from $a_i$ to $a_{i-1}$ produces a PSD forcing set $B'=(B\setminus \{a_{i}\})\cup \{a_{i-1}\}$ with $\ptp(H_{2k+8};B')=\ptp(H_{2k+8};B)-1$, as $b_0$ will be the last vertex in $H_8$ forced regardless of what $B$ and $B'$ are. A similar argument applies when $B$ contains some vertex in $\{b_1,b_2,\ldots,b_k\}$, and in these situations, we conclude that $B$ cannot be efficient. From this, we see that if we wish to select an efficient PSD forcing set $B$ for $H_{2k+8}$, we must select three vertices from $H_8$. Regardless of which three vertices we select, either $a_0$ or $b_0$ will be the last vertex of $H_8$ forced, implying that
\[\ptp(H_{2k+8})=\ptp(H_{2k+8};B)=\ptp(H_8)+k=k+3.\]

Since the order of $\ol{H_{2k+8}}$ is $2k+8$ and $\ptp(H_{2k+8})=k+3$, to complete the proof it suffices to show that $\ptp(\ol{H_{2k+8}})=3$. The software \cite{sage-PSDprop} was used to verify that $\ptp(\ol{H_{10}})=\ptp(\ol{H_{12}})=3$, so we focus on the remaining cases. Fix $k\ge 3$, and let $H=H_{2k+8}$, so $\ol{H}=\ol{H_{2k+8}}$. 

We first show $\Zp(\ol{H})= 2k+3$ and $\ptp(\ol{H})\le 3$. Notice that Theorem \ref{ZpNG} and  $\Zp(H)=3$ imply that $\Zp(\ol{H})\ge 2k+3$. Since $\ol{H}$ contains $\ol{H_8}$ as a subgraph,  we let $B=B_0\cup X$ where $B_0$ is an efficient PSD forcing set for $\ol{H_8}\cong H_8$ and $X=\{a_1,\ldots,a_k,b_1,\ldots,b_k\}$. Notice that $|B|=2k+3$. 
The graph $\ol{H}$ also contains $\ol{H_{12}}$ as a subgraph, and whenever all vertices of $X$ are blue, we may assume forcing takes place entirely within $\ol{H_{12}}$, since $a_i\to w$ can be replaced by $a_2\to w$ for $i\ge 3$, and similarly for $b_i$. Combined, we conclude that $B$ is a PSD forcing set for $\ol{H}$, $\Zp(\ol{H})=2k+3$, and $\ptp(\ol{H})\le \ptp(\ol{H};B)=\ptp(\ol{H_{12}};B\cap V(\ol{H_{12}}))=3$.

To prove $\ptp(\ol{H})\geq  3$, we show that an efficient PSD forcing set $B$ must contain all vertices of $X$. Let $B\subset V(\ol H)$ be a PSD forcing set of $\ol H$ such that  $|B|=2k+3$ and $X\not\subseteq B$. Let $W=V(\ol H)\setminus B$, so $W\cap X\ne\emptyset$. 

We begin by showing  $\ol{H}[W]$ is connected.  Suppose first that $|W\cap X|\ge 2$, and without loss of generality, assume there is some $a_i\in W\cap X$.  
Then $\ol{H}-B=\ol H[W]$ is connected because every vertex except $a_{i-1}$ and $a_{i+1}$ is adjacent to $a_i$, $|W\cap X|\ge 2$, and $|W|=5$. 
Alternatively, suppose $|W\cap X|=1$, and assume without loss of generality that $W\cap X=\{a_i\}$. If $a_0\notin W$, then $a_i$ is adjacent to all other vertices in $W$, again implying $\ol{H}[W]$ is connected. If $a_0\in W$, then there must be three white vertices in $\ol{H_8}\setminus \{a_0\}$, which are all adjacent to $a_i$. Furthermore, $\deg_{\ol{H_8}}a_0=5$ and $|W\cap V(\ol{H_8})|=4$ imply that some neighbor of $a_0$ is white. In all cases, $\ol{H}[W]$ is connected.

Since $\ol{H}-B$ is connected, the first force must be a standard force. If vertex $u$ performs the first force, then $\deg_{\ol{H}}u\le \Zp(\ol H)=  2k+3$, so $\deg_H u\ge 4$, i.e., $u\in \{x,x',y,y'\}$. If $|W\cap X|\geq 2$, then $u$ is adjacent to multiple white vertices,  implying $u$ cannot perform a force, and $B$ would not be a PSD forcing set. If $|W\cap X|=1$, notice that $a_i\in W\cap X$ is adjacent to all of $x,x',y$ and $y'$. Then the only vertex forced at the first time step is $a_i$, implying $\ptp(\ol{H};B)\geq 2$. 
So Lemma \ref{time1shift} implies that $B$ is not efficient. 

Thus, any efficient PSD forcing set $B$ for $\ol{H}$ must contain all of $X$. For any such $B$, we can again assume forcing takes place entirely within $\ol{H_{12}}$, and we conclude that $\ptp(\ol{H})=\ptp(\ol{H};B)= \ptp(\ol{H_{12}};B\cap V(\ol{H_{12}}))\geq 3$. 
 \end{proof}

In order for the upper bound in Theorem \ref{t:NG} to be tight, $n$ must be even. In addition to the family $H_{2k+8}$ presented in the proof, the upper bound is tight for   $P_4$.  A computer search shows there is no graph $G$ of order 6 realizing $\ptp(G) + \ptp(\overline{G}) =5=\frac{6}{2} + 2$.

Finally we consider the maximum PSD propagation time for graphs with arbitrary order and fixed PSD forcing number. For a fixed positive integer $k$ and  $n>k$, define \[\zeta(n,k)=\max\{\pt_+(G): \, |V(G)|=n \text{ and } \Z_+(G)=k\}.\]
By \thref{nover2}, $\zeta(n,k)\le \lc \frac{n-k}{2}\rc$. We construct a family of examples realizing this bound.
Define the lollipop graph $L_{m,r}$ as the graph obtained by starting with the complete graph $K_m$ with $m\ge 3$ and a (disjoint) path $P_r$, and then adding  an edge between some vertex $v$ of $K_m$ and an endpoint of $P_r$; the order of $L_{m,r}$ is $m+r$.  See Figure \ref{lollipop}. 

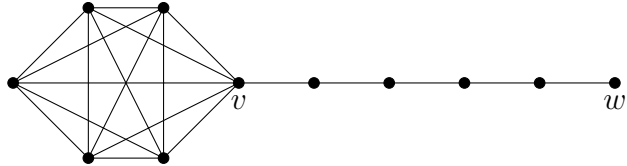
\begin{figure}[h!]
    \centering
    \begin{tikzpicture}
    \filldraw[fill=black,draw=black] (0,0) circle (2pt);
    \filldraw[fill=black,draw=black] (3,0) circle (2pt);
    \filldraw[fill=black,draw=black] (4,0) circle (2pt);
    \filldraw[fill=black,draw=black] (5,0) circle (2pt);
    \filldraw[fill=black,draw=black] (6,0) circle (2pt);
    \filldraw[fill=black,draw=black] (7,0) circle (2pt);
    \filldraw[fill=black,draw=black] (8,0) circle (2pt);
    \filldraw[fill=black,draw=black] (1,1) circle (2pt);
    \filldraw[fill=black,draw=black] (2,1) circle (2pt);
    \filldraw[fill=black,draw=black] (1,-1) circle (2pt);
    \filldraw[fill=black,draw=black] (2,-1) circle (2pt);
    \draw (8,0) -- (0,0) -- (1,1) -- (2,1) -- (3,0) -- (2,-1) -- (1,-1) -- (0,0) -- (2,1) -- (2,-1) -- (0,0);
    \draw (1,1) -- (3,0) -- (1,-1) -- (1,1) -- (2,-1);
    \draw (1,-1) -- (2,1);
    \node at (3,-0.25) {$v$};
    \node at (8,-0.25) {$w$};
    \end{tikzpicture}
    \caption{The lollipop graph $L_{6,5}$\label{lollipop}.}
\end{figure}

\begin{proposition}
\label{nover2tight}  For $m\ge 3$ and $r\ge 1$, \[\ptp(L_{m,r})=\lc \frac{|V(L_{m,r})|-\Zp(L_{m,r})}{2}\rc.\]
\end{proposition}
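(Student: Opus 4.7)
The plan is to apply Theorem 2.3 to obtain the upper bound $\ptp(L_{m,r}) \le \lc \tfrac{|V(L_{m,r})| - \Zp(L_{m,r})}{2} \rc$ immediately, so the proposition reduces to computing $\Zp(L_{m,r}) = m-1$ (which makes the upper bound equal $\lc (r+1)/2 \rc$) and proving the matching lower bound $\ptp(L_{m,r}) \ge \lc (r+1)/2 \rc$ by classifying the PSD forcing sets of minimum size. Throughout, let $v$ denote the vertex of $K_m$ adjacent to the endpoint $p_1$ of the path.

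To establish $\Zp(L_{m,r}) = m-1$, the set $B_0 = (V(K_m) \setminus \{v,u\}) \cup \{p_1\}$, for any $u \in V(K_m) \setminus \{v\}$, is a PSD forcing set of size $m-1$. For the matching lower bound, the key structural observation is that every vertex of $K_m \setminus \{v\}$ has all of its neighbors in $K_m$, so it can only be PSD forced by another $K_m$-vertex. Consequently, if $B$ leaves three or more vertices of $K_m$ white, or leaves exactly two $K_m$-vertices white while $v \in B$, then in the component containing those white $K_m$-vertices every blue $K_m$-vertex has at least two white neighbors, and this obstruction cannot be cleared by any amount of propagation along the path (which can at most turn $v$ blue without further affecting the remaining white $K_m$-vertices). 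Hence $|B \cap V(K_m)| \ge m-2$, with $v \notin B$ in the boundary case; in that boundary case $B$ must additionally contain a path vertex, so $|B| \ge m-1$ in every situation.

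For the lower bound on propagation time, any PSD forcing set $B$ with $|B|=m-1$ is, by the classification above, either (i) contained in $V(K_m)$, or (ii) of the form $(V(K_m) \setminus \{v,u\}) \cup \{p_j\}$ for some $u \in V(K_m) \setminus \{v\}$ and some $1 \le j \le r$. In case (i) the entire path is white and must be propagated sequentially from $v$ (after possibly one preliminary step forcing $v$ itself), giving $\ptp(L_{m,r};B) \ge r \ge \lc (r+1)/2 \rc$. In case (ii) the components of $G-B$ are the upper path $\{p_{j+1},\dots,p_r\}$, which takes exactly $r-j$ steps to propagate, and the lower component containing $u$, $v$, and $\{p_1,\dots,p_{j-1}\}$, which takes exactly $j+1$ steps because $u$ cannot be forced until $v$ is blue (every blue $K_m$-vertex sees both $u$ and $v$ as white until then) and $v$ is reached only after the $j$-step sequential propagation $p_j \to p_{j-1} \to \cdots \to p_1 \to v$. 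Therefore $\ptp(L_{m,r};B) = \max(r-j, j+1) \ge \lc \tfrac{(r-j)+(j+1)}{2} \rc = \lc (r+1)/2 \rc$ by averaging. The main obstacle is the structural lower bound on $\Zp(L_{m,r})$ --- the argument must carefully track how the component structure of $G - B$ evolves as path propagation may turn $v$ blue, in order to rule out clever minimum PSD forcing sets with fewer than $m-2$ vertices in $K_m$ --- after which the two-case analysis and the averaging inequality are routine.
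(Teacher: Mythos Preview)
Your proof is correct and follows essentially the same strategy as the paper's: establish $\Zp(L_{m,r})=m-1$, then show every minimum PSD forcing set has propagation time at least $\lc (r+1)/2\rc$ by observing that any such set places at least $m-2$ vertices in the clique and then analyzing where the remaining vertex sits. The only organizational difference is that you obtain the upper bound by invoking \thref{nover2} directly, whereas the paper instead exhibits an explicit efficient set $B$ (the $m-2$ clique vertices together with the path vertex at distance $\lc r/2\rc$ from $w$) and computes $\ptp(L_{m,r};B)=\lc (r+1)/2\rc$; your case analysis for the lower bound is the fleshed-out version of the paper's one-sentence efficiency claim.
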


\begin{proof} 
 Let $v$ be the vertex of degree $m$ and let $w$ be the vertex of degree one in $L_{m,r}$.  It is clear that $\Z_+(L_{m,r})=m-1$ since $L_{m,r}$ contains $K_m$ as a subgraph and any set of $m-1$ of the vertices in $K_m$ is a PSD forcing set.

Consider a PSD forcing set $B$ consisting of $m-2$ of the vertices in $K_m \setminus \{v\}$ and the vertex of $P_r$ at distance $\lc \frac r 2\rc$ from $w$,
It will take $\lc \frac r 2\rc$ time steps to force the vertices of $P_r$ and  $r-\lc \frac r 2 \rc+1=\lc \frac {r+1} 2\rc$ time steps to force the last vertex of $K_m$.
Thus,  \[\pt_+(L_{m,r};B)=\lc \frac {r+1} 2\rc=\lc \frac {(m+r)-(m-1)} 2\rc = \left\lceil \frac{|V(L_{m,r})|-\Z_+(L_{m,r})}{2}\right\rceil.\]

The set $B$ is efficient for $L_{m,r}$ because any PSD forcing set must contain at least $m-2$ of the vertices in $K_m$ and any other choice for the last vertex results in a propagation time that is at least as large.
\end{proof}

\begin{corollary}
For any $n\ge k\ge 1$, there exists a graph $G$ such that $|V(G)|=n$, $\Z_+(G)=k$, and $\pt_+(G)= \lc \frac{n-k}{2}\rc$. Thus, the bound 
$\pt_+(G)\leq \lc \frac{|V(G)|-\Z_+(G)}{2} \rc$
 is tight for each $\Zp(G)$.   
\end{corollary}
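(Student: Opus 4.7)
The plan is to deduce this corollary directly from Proposition \thref{nover2tight} together with a short case analysis handling a few boundary values. The lollipop graph $L_{m,r}$ has order $m+r$ and $\Zp(L_{m,r})=m-1$, so substituting $m=k+1$ and $r=n-k-1$ yields a graph of order $n$ with PSD zero forcing number $k$ and PSD propagation time equal to $\lc (n-k)/2\rc$, by Proposition \thref{nover2tight}. This construction applies whenever its hypotheses $m\ge 3$ and $r\ge 1$ hold, i.e., whenever $k\ge 2$ and $n\ge k+2$.

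The remaining cases lie on the boundary of the admissible region and are covered by standard families. When $n=k$, take the edgeless graph $\ol{K_n}$, which has $\Zp(\ol{K_n})=n=k$ and $\ptp(\ol{K_n})=0=\lc 0/2\rc$. When $n=k+1\ge 2$, take the complete graph $K_n$, for which $\Zp(K_n)=n-1=k$ and $\ptp(K_n)=1=\lc 1/2\rc$. When $k=1$ and $n\ge 2$, take the path $P_n$: one checks that $\Zp(P_n)=1$, and Remark \thref{TreePropTimeLemma} asserts $\ptp(P_n)=\lc (n-1)/2\rc$, which equals $\lc (n-k)/2\rc$ for $k=1$.

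Together these four subcases cover every pair $(n,k)$ with $n\ge k\ge 1$, so the corollary follows; the final sentence about tightness for each value of $\Zp(G)$ is then just a restatement of what has been exhibited. The main obstacle in the underlying circle of ideas was already confronted in Proposition \thref{nover2tight}, where the propagation time of the lollipop was computed; here the only work is organizing the parameter dictionary $m=k+1,\ r=n-k-1$ and patching the few low-order corners with familiar examples.
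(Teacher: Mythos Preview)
Your proof is correct and follows the paper's intended approach: the corollary is stated there without proof, immediately after Proposition~\ref{nover2tight}, with the implicit understanding that the lollipop graphs $L_{k+1,n-k-1}$ furnish the desired examples. You have been more careful than the paper by explicitly patching the boundary cases $n=k$, $n=k+1$, and $k=1$ (where the hypotheses $m\ge 3$, $r\ge 1$ of Proposition~\ref{nover2tight} fail) with $\ol{K_n}$, $K_n$, and $P_n$ respectively, and all of these verifications are correct.
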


\begin{corollary}
For a fixed positive integer $k$,
\[\lim_{n\to\infty} \frac{\zeta(n,k)}{n}=\frac{1}{2}.\]
\end{corollary}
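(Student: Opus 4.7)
The plan is a short squeeze argument that directly combines the upper bound of \thref{nover2} with the matching lollipop construction from the immediately preceding corollary. No new technique is needed: the content of this corollary is essentially the observation that both of the existing tight bounds on $\zeta(n,k)$ are asymptotically $n/2$ when $k$ is held fixed.

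For the upper half of the squeeze, for every graph $G$ with $|V(G)|=n$ and $\Zp(G)=k$, \thref{nover2} gives $\ptp(G)\leq \lceil (n-k)/2 \rceil$. Taking a maximum over all such $G$ yields $\zeta(n,k)\leq \lceil (n-k)/2 \rceil$, so
\[
\frac{\zeta(n,k)}{n} \;\leq\; \frac{\lceil (n-k)/2 \rceil}{n} \;\longrightarrow\; \frac{1}{2} \quad \text{as } n\to\infty,
\]
since $k$ is fixed.

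For the lower half, I would invoke the immediately preceding corollary, which supplies, for each $n\geq k\geq 1$, an explicit graph $G$ (namely the lollipop $L_{k+1,\,n-k-1}$ when $k\geq 2$, and the path $P_n$ when $k=1$, using \thref{TreePropTimeLemma}) with $|V(G)|=n$, $\Zp(G)=k$, and $\ptp(G)=\lceil (n-k)/2 \rceil$. This immediately gives $\zeta(n,k)\geq \lceil (n-k)/2 \rceil$, and the same elementary calculation shows $\zeta(n,k)/n\to 1/2$ from below. The squeeze theorem then produces the claimed limit.

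There is no real obstacle to overcome here: the two-sided tightness of $\zeta(n,k)$ has already been proved upstream in \thref{nover2} and \thref{nover2tight}, and given those results the present corollary is essentially a one-line consequence.
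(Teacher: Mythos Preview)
Your proof is correct and follows essentially the same approach as the paper: the upper bound comes from \thref{nover2} and the lower bound from the lollipop construction in Proposition~\ref{nover2tight} (with $P_n$ handling $k=1$), yielding the squeeze. The only cosmetic difference is that the paper writes the lower estimate as $\zeta(n,k)/n\ge (n-k)/(2n)$ rather than invoking the exact equality $\zeta(n,k)=\lceil (n-k)/2\rceil$, but the argument is the same.
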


\begin{proof}
Starting with Theorem \ref{nover2} and letting $n\to\infty$ implies
\[\lim_{n\to\infty} \frac{\zeta(n,k)}{n}\leq \frac{1}{2}.\] For the lower bound with fixed $k$, Proposition \ref{nover2tight} implies that for any $n\ge k+3$, 
$\pt_+(L_{k+1,n-k-1})=\lc \frac{n-k}{2}\rc.$
Then \[\frac{\zeta(n,k)}{n}\geq \frac{\lceil \frac{n-k}{2}\rceil}{n}\geq \frac{n-k}{2n},\]
and letting $n\to\infty$ implies the result.
\end{proof}

\section*{Acknowledgements}

The research of all the authors was partially supported by NSF grant 1916439.  The research of Yaqi Zhang was also partially supported by Simons Foundation grant 355645 and NSF grant 2000037.

\bibliographystyle{plain}

\end{document}